\newtheorem{theorem}{Theorem}[section]
\newtheorem{lemma}[theorem]{Lemma}
\newtheorem{corollary}[theorem]{Corollary}
\newtheorem*{thmno}{Theorem}
\newtheorem*{corno}{Corollary}
\newtheorem*{remark}{Remark}
\newtheorem*{propno}{Proposition}
\numberwithin{equation}{section}
\numberwithin{theorem}{section}
\title{Antiquantum $q$-series identities \\and mock theta functions}
\author{Amanda Folsom and David Metacarpa}
\address{Department of Mathematics, Amherst College, Amherst, MA 01002, USA}
\email{afolsom@amherst.edu}
\email{dmetacarpa24@amherst.edu}
 \date{}
\keywords{Mock theta functions; $q$-series; $q$-hypergeometric series; basic hypergeometric series; quantum modular forms; quantum $q$-series}
\subjclass{11F37; 11F99; 33D15; 33D70; 33D99}
\thanks{\emph{Acknowledgments.} The authors are grateful for support from the National Science Foundation RUI Grant DMS-2200728 (PI=first author). The authors thank the anonymous referee for their helpful comments on an earlier version of the paper.}
\begin{document}

 \maketitle
 
\begin{center}\emph{Dedicated to  Krishnaswami Alladi, the Founding Editor in Chief of the\\ Ramanujan Journal, for his inspiration, commitment, and service.} \end{center}

 \begin{abstract}  
 Ramanujan's original definition of  mock theta functions from 1920 involves their asymptotic behaviors at roots of unity on the boundary of the disk of convergence $|q|<1$.  More recently this topic has  been  related by several authors, including the first author with Ono and Rhoades in 2013, to  quantum modular forms,  first defined in 2010  by Zagier.  In 2021, Lovejoy  defined and studied related quantum $q$-series identities, which do not hold as equalities between power series inside the disk $|q|<1$ but which do hold on   dense subsets of roots of unity on the boundary.  Inspired by this, in our prior joint work from 2024 we further studied quantum $q$-series identities as related to mock theta functions  and quantum modular forms;  we also defined and studied antiquantum $q$-series identities, between series which are equal  inside the disk $|q|<1$ but which hold at dense sets of roots of unity on the boundary for which one of the series diverges and is unnaturally truncated.   Here, building from our previous work, we establish antiquantum $q$-series identities for all of Ramanujan's third order mock theta functions.  We deduce these results in part by establishing and applying more general    identities  which are also of independent interest, and by using the theory of modular eta-quotients.  
     \end{abstract}
 
\section{Introduction}\label{sec_intro}

Understanding $q$-series and mock theta functions at roots of unity,  on the boundary of the  disk of convergence $|q|<1$, is a problem of interest.  For example, Ramanujan's original definition  of mock theta functions involves their asymptotic closeness   to modular theta functions at root of unity singularities, as illustrated in the following radial limit asymptotic for Ramanujan's third order mock theta function 
$$f(q):= \sum_{n=0}^\infty \frac{q^{n^2}}{(-q;q)_n^2}.  $$
Towards   (reduced) even order     root  of unity $\zeta_{2k}^h$ singularities \cite{BR, FOR},
\begin{align}\label{eqn_frl}\lim_{q\to \zeta_{2k}^h} \left(f(q) - (-1)^k(q;q^2)_\infty  \bigg(\sum_{n=-\infty}^\infty (-1)^n q^{n^2}\bigg) \right)= O(1).\end{align}
Here and throughout we use the notation $\zeta_N:=e^{2\pi i /N} (N\in\mathbb N),$ and say a root of unity $\zeta_b^a$ (or a fraction $a/b$) is \emph{reduced} if $b\in\mathbb N, a\in\mathbb Z,$ and $\gcd(a,b)=1$.  We also adopt the standard notation for the $q$-Pochhammer symbol 
$(a;q)_n := \prod_{j=0}^{n-1}(1-aq^j),$ for  $n\in \mathbb N_0 \cup \{ \infty \}$.  The function 
$(q;q^2)_\infty  (\sum_{n=-\infty}^\infty (-1)^n q^{n^2})$ appearing in \eqref{eqn_frl} is a modular form up to multiplication by $q^{-\frac{1}{24}}$, after specializing $q=e^{2\pi i \tau}, \tau \in \mathbb H:= \{\tau \in \mathbb C \colon \Im(\tau)>0\},$ the standard modular variable.  Moreover, the implied $O(1)$ constants on the right-hand side of \eqref{eqn_frl} are explicitly established in \cite[Thm.\ 1.1]{FOR} by the first author and Ono and Rhoades (via the more general result  \cite[Thm.\ 1.2]{FOR}), and are shown to be  special values of \emph{quantum modular forms}.   Loosely speaking, after Zagier \cite{Zagier}, a quantum modular form is a complex-valued function defined on $\mathbb Q$ (as opposed to the modular domain $\mathbb H$), which transforms like a modular form  under the action of $\textnormal{SL}_2(\mathbb Z)$ (or suitable subgroup) on $\mathbb Q$ but up to the addition of  analytic error functions in $\mathbb R$.  See \cite{Zagier} for details, or \cite{BFOR} and references therein for more on quantum modular forms and radial limits, both of which have been research topics of  interest in recent years. 

In \cite{LovejoyQ}, Lovejoy defined the related concept of \emph{quantum $q$-series identities}, which do not hold as equalities between series inside the unit disk in the clasical sense, but do hold on dense subsets of the boundary, namely, at roots of unity.  Earlier examples, prior to the  terminology, were given  by Cohen 
\cite{Cohen}, Bryson-Ono-Pitman-Rhoades \cite{BOPR}, and the first author with Ki, Truong Vu and Yang \cite{FKTY}.   In \cite{LovejoyQ} Lovejoy elegantly shows how to deduce these and many other quantum $q$-series identities using  $q$-series transformations, some of which involve Ramanujan's mock or false theta functions, including the following \cite[(1.28)]{LovejoyQ}
\begin{align}\label{eqn_ljnu}-\sum_{n=0}^\infty (-q;q^2)_n q^n = \sum_{n=0}^\infty (q^{-2};q^{-4})_n q^{-2n-1},\end{align}
which holds at reduced roots of unity $q=\zeta_{k}^h$ with $k\equiv 2\pmod{4}$, and which does \emph{not} hold in general as a  power series identity for $|q|<1$. The function 
\begin{align}\label{def_nutil0} \widetilde{\nu}(q) := \sum_{n=0}^\infty (q;q^2)_n(-1)^n q^n\end{align} appearing on the left-hand side of \eqref{eqn_ljnu} (with $q\mapsto -q$) is a known alternate $q$-hypergeometric series representation for the  third order mock theta function  
$\nu(q):= \sum_{n=0}^\infty q^{n^2+n}/(-q;q^2)_{n+1}$ inside the disk; that is, $\nu(q) = \widetilde{\nu}(q)$ for $|q|<1$ \cite{Fine}.

In our earlier joint work \cite{FM}, we further study quantum $q$-series,  as related to mock theta functions, $q$-hypergeometric series, radial limits, and quantum modular forms.  In the last section of \cite{FM}, we also define and investigate what we call \emph{antiquantum $q$-series identities,} inspired by and to complement Lovejoy's notion of quantum $q$-series \cite{LovejoyQ} (discussed above).   Our antiquantum $q$-series identities are between series which converge and are equal to each other inside the disk $|q|<1,$ but our identities hold at  dense  sets of roots of unity on the boundary for which one of the series diverges and is ``unnaturally" truncated.  To illustrate this, we restate here our antiquantum results from \cite{FM}, and begin with our general  identity  \cite[Prop.\ 5.1]{FM}, as  stated in the next theorem  which holds for reduced roots of unity $q=\zeta_k^h$.

\begin{thmno}[\cite{FM}, Prop.\ 5.1]
    For reduced roots of unity $q=\zeta_k^h$, we have that  $$ \sum_{n=0}^\infty \frac{(-b)^n q^{n^2}}{(bq;q)_n(-q;q)_n} = \frac{2q(1-b)}{(2-b^k)}\sum_{n=0}^{k-1}(b^{-1}q^2;q^2)_n (bq)^n.$$
\end{thmno}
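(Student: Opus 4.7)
The plan is to exploit the $k$-periodicity present at $q = \zeta_k^h$, reducing the stated identity to a finite polynomial identity in $b$, which is then proved by coefficient comparison (or alternatively by a $q$-series transformation).

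Writing $a_n(b) := (-b)^n q^{n^2}/((bq;q)_n(-q;q)_n)$, the classical cyclotomic factorization $\prod_{j=0}^{k-1}(1-cq^j) = 1-c^k$ (valid whenever $q$ is a primitive $k$-th root of unity), together with $q^{k^2}=q^{2nk}=1$, shows that the ratio $a_{n+k}(b)/a_n(b) = r(b)$ is independent of $n$. For $k$ odd a direct computation gives $r(b) = -b^k/(2(1-b^k))$, and summing the $k$-periodic series geometrically yields
\[
\sum_{n=0}^{\infty} a_n(b) \;=\; \frac{1}{1-r(b)}\sum_{n=0}^{k-1}a_n(b) \;=\; \frac{2(1-b^k)}{2-b^k}\sum_{n=0}^{k-1}a_n(b).
\]
Comparing with the claimed right-hand side, the theorem reduces to the finite rational-function identity
\[
(1-b^k)\sum_{n=0}^{k-1}\frac{(-b)^n q^{n^2}}{(bq;q)_n(-q;q)_n} \;=\; q(1-b)\sum_{n=0}^{k-1}(b^{-1}q^2;q^2)_n(bq)^n.
\]

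Using $1-b^k = (1-b)(bq;q)_{k-1}$ and the easily verified simplification $(b^{-1}q^2;q^2)_n(bq)^n = q^n\prod_{j=1}^{n}(b-q^{2j})$, after canceling $(1-b)$ this finite equality becomes a polynomial identity in $b$ of degree $k-1$ at the fixed value $q = \zeta_k^h$. I would prove this by expanding both sides and comparing coefficients of $b^m$ for $0 \leq m \leq k-1$. Each coefficient comparison reduces to a cyclotomic sum identity at $q = \zeta_k^h$ that can be handled using $(q;q)_n(-q;q)_n = (q^2;q^2)_n$ together with standard root-of-unity facts such as $\sum_{n=0}^{k-1}q^{jn} = 0$ for $j\not\equiv 0\pmod{k}$. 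As a sanity check, for $k=3$ the polynomial identity collapses to the cyclotomic identity $(b-q)(1-bq)(1+q) = 1+b+b^2$ at $q^3 = 1$, which one verifies directly by expanding.

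I expect the main obstacle to be the coefficient-comparison step: the coefficient of $b^m$ on the LHS is a weighted sum over $n$ of $q$-power quotients involving $(-q;q)_n$, and matching these with the more ``geometric'' coefficients from the RHS requires careful algebra at the root of unity. A cleaner alternative would be to establish a general $|q|<1$ ``bridge'' transformation between the two series---for example by placing the LHS in $_3\phi_2$ form via $(q;q)_n(-q;q)_n = (q^2;q^2)_n$ and applying a Heine or Rogers--Fine transformation---and then specializing to $q = \zeta_k^h$. The case $k$ even requires separate handling: since $(-q;q)_n$ vanishes for $n \geq k/2$ at reduced $q = \zeta_k^h$ with $k$ even, the LHS infinite series is formally divergent and must be interpreted as the radial limit from inside $|q|<1$, and the periodicity computation must also be modified since $q^2$ then has order $k/2$, changing the formula for $r(b)$ and correspondingly the shape of the resulting finite identity.
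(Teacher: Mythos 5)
Your periodicity reduction is correct and is a genuinely different strategy from the authors'. At a primitive $k$th root of unity with $k$ odd, $(bq;q)_{n+k}=(1-b^k)(bq;q)_n$ and $(-q;q)_{n+k}=2\,(-q;q)_n$, so indeed $a_{n+k}/a_n=-b^k/(2(1-b^k))$ and the geometric summation produces exactly the factor $\tfrac{2(1-b^k)}{2-b^k}$; your reduction to the finite identity $(1-b^k)\sum_{n=0}^{k-1}a_n(b)=q(1-b)\sum_{n=0}^{k-1}(b^{-1}q^2;q^2)_n(bq)^n$ is therefore sound (I checked it directly for $k=3$, as you did). But that finite polynomial identity \emph{is} the theorem: the infinite-to-finite step is routine, and everything nontrivial has been pushed into the coefficient comparison, which you do not carry out. ``Each coefficient comparison reduces to a cyclotomic sum identity that can be handled using standard root-of-unity facts'' is a plan, not an argument --- the coefficient of $b^m$ on the left is a sum over $n$ of products $(bq^{n+1};q)_{k-1-n}$-coefficients weighted by $q^{n^2}/(-q;q)_n$, and there is no indication of how these collapse to the right-hand side's coefficients. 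Until that is done, the proof has a genuine gap at its central step. (Two smaller points: both sides of your finite identity have degree $k$ in $b$, not $k-1$; and for $k$ even the terms of the left-hand series have vanishing denominators, so per the paper's stated convention the identity is simply to be read where both sides converge --- reinterpreting it as a radial limit would be a different and much harder claim.)

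For comparison, the paper does not reprove this statement (it is quoted from \cite{FM}), but the proof of the companion result, Theorem \ref{thm_bid}, shows the authors' method: split the finite sum using the same periodicity identity $(xq;q)_{s+mk}=(1-x^k)^m(xq;q)_s$, transform it with Lovejoy's polynomial identity \eqref{eqn_lovejoyb23} and the $\phi_r$-inversion of \cite[Prop.\ 2.1]{FM}, and then convert the resulting $\phi_1$ series via one of Fine's transformations. That route is essentially the ``cleaner alternative'' you mention at the end --- a bridge transformation valid for $|q|<1$ specialized to roots of unity --- and it is the one that actually closes the gap, since it never requires matching coefficients term by term. If you want to complete your version, either carry out the coefficient comparison in full (e.g., by showing both sides satisfy the same recursion in $k$ or the same $b$-difference equation), or switch to the transformation chain.
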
  This result reveals that  at 
  $k$th roots of unity $q$, an unnatural truncation of the series on the right-hand side (meaning when evaluated at $k$th roots of unity $q$, the corresponding infinite series does not in general equal its truncation at the $k-1$ summand) multiplied by a certain factor equals the series on the left-hand side. 

  \begin{remark}\textnormal{
 As is common with $q$-hypergeometric identities in the literature (see, e.g., \cite{Fine, GR, LovejoyQ}), we may state our results without enforced conditions on   parameters appearing (e.g.,  $b$) for maximum applicability, with the understanding that the identities may be used with any values such that the left- and right-hand sides simultaneously converge, or in other appropriate limiting situations.} 
  \end{remark}

Especially motivating our antiquantum identity in our  theorem stated above is its application  to two of Ramanujan's third order mock theta functions, namely to  
$$\psi(q) := \displaystyle \sum_{n=1}^\infty \frac{q^{n^2}}{(q;q^2)_n}, \ \ \ \ \ \ \  \phi(q) := \displaystyle\sum_{n=0}^\infty \frac{q^{n^2}}{(-q^2;q^2)_n}.$$   It is known \cite{Fine} that as $q$-series, inside the disk $|q|<1,$ these mock theta functions possess the alternate series representations  
$\psi(q) = \widetilde{\psi}(q)$ and $\phi(q) = \widetilde{\phi}(q)$, where

\begin{align*} 
\widetilde{\psi}(q)  := \sum_{n=0}^\infty (-q^2;q^2)_n q^{n+1}, \ \ \ \ \ \ \ 
\widetilde{\phi}(q)  := 1+\sum_{n=0}^\infty (q;q^2)_n  (-1)^n q^{2n+1}.
\end{align*}

Our earlier antiquantum result  from \cite{FM} for these mock theta functions  stated in the corollary below, ultimately deduced from \cite[Prop. 5.1]{FM} stated in the theorem above, reveals that when evaluated at  odd ordered roots of unity $q$, the mock theta functions $\psi(-q)$ and $\phi(-q)$  converge, and  are equal to  unnatural  truncations of their counterparts $\widetilde{\psi}(-q)$ and $\widetilde{\phi}(-q),$  which as infinite series do not converge at odd ordered roots of unity, and up to additional multiplication by and addition of constants. 

\begin{corno}[\cite{FM}, Cor.\ 5.2]
    For reduced roots of unity $q=\zeta_k^h$ with $k$ odd, we have that 
   $$\psi(-q) = \tfrac13 \widetilde{\psi}_{[k]}(-q)$$ and  $$\phi(-q) = \tfrac23 + \tfrac13 \widetilde{\phi}_{[k]}(-q).$$
\end{corno}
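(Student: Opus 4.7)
I would deduce the corollary by applying the preceding proposition at two complementary specializations and then invoking a classical linear relation among third order mock theta functions together with the theory of modular eta-quotients.

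For the first claim, I would set $b=-1$ in the proposition. Since $k$ is odd, $b^k=-1$, so the prefactor $2q(1-b)/(2-b^k)$ collapses to $4q/3$. The Pochhammer product $(bq;q)_n(-q;q)_n$ becomes $(-q;q)_n^2$, while $(-b)^n=1$, $(b^{-1}q^2;q^2)_n=(-q^2;q^2)_n$, and $(bq)^n=(-q)^n$, so the left-hand side is exactly Ramanujan's third order mock theta function $f(q)$. This yields
\[
f(q) \;=\; \tfrac{4q}{3}\sum_{n=0}^{k-1}(-q^2;q^2)_n(-q)^n.
\]
Since by definition $\widetilde{\psi}(-q)=-q\sum_{n\geq 0}(-q^2;q^2)_n(-q)^n$, the truncation $\widetilde{\psi}_{[k]}(-q)$ equals $-q$ times the finite sum above, and the displayed identity rearranges to $\tfrac{1}{3}\widetilde{\psi}_{[k]}(-q)=-\tfrac{1}{4}f(q)$. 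The first corollary statement is therefore equivalent to the analytic identity $\psi(-q)=-\tfrac{1}{4}f(q)$ at $q=\zeta_k^h$ with $k$ odd.

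For the second claim I would re-specialize the proposition at $b=-q$, chosen so that $b^{-1}q^2=-q$ produces precisely the Pochhammer symbol $(-q;q^2)_n$ appearing in $\widetilde{\phi}(-q)=1-q\sum_{n\geq 0}(-q;q^2)_n(-q^2)^n$. Since $(-q)^k=-1$ at $q=\zeta_k^h$ for $k$ odd, the prefactor becomes $2q(1+q)/3$, and a parallel rearrangement yields a formula for $\widetilde{\phi}_{[k]}(-q)$ in terms of the convergent auxiliary series $S(q):=\sum_{n\geq 0}q^{n^2+n}/((-q;q)_n(-q^2;q)_n)$. Invoking Watson's classical relation $\phi(-q)=f(q)+2\psi(-q)$ (a power-series identity on $|q|<1$ that extends by radial continuity to the boundary points at which all three functions converge), together with the analytic identity of the preceding paragraph, reduces the second corollary statement to the auxiliary power-series identity $S(q)=(1+q)(2-f(q))$, which holds for $|q|<1$ and can be established by standard $q$-series transformations.

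The principal obstacle is the analytic identity $\psi(-q)=-\tfrac{1}{4}f(q)$ at reduced odd-order roots of unity, or equivalently the vanishing of $f(q)+4\psi(-q)=2\phi(-q)-f(q)$ at such points. This combination equals (up to a rational power of $q$) a modular eta-quotient by classical results of Watson, and its vanishing at reduced odd-order roots of unity is verified by computing the orders of the associated $\eta$-factors. This is precisely where the theory of modular eta-quotients enters the argument, as foreshadowed in the abstract.
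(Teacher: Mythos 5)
Your strategy for the first identity is sound and matches what the paper indicates is the route in \cite{FM}: setting $b=-1$ in the quoted proposition gives $f(q)=\tfrac{4q}{3}\sum_{n=0}^{k-1}(-q^2;q^2)_n(-q)^n=-\tfrac43\,\widetilde{\psi}_{[k]}(-q)$, and Watson's relation \eqref{eqn_wat1} together with the product formula \eqref{eqn_jacids} identifies $f(q)+4\psi(-q)$ with the eta-quotient $u(q)=q^{1/24}\eta^3(\tau)/\eta^2(2\tau)$, which vanishes at odd-order cusps by Lemma \ref{lem_etaquot}(i); combined with convergence of $f(q)$ and $\psi(-q)$ at odd-order roots of unity (cf.\ Lemma \ref{lem_mockconv}) and an Abel-limit argument, this yields $\psi(-q)=-\tfrac14 f(q)$ and hence the first claim. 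That half is complete and correct.

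The gap is in the $\phi$ identity. Your reduction via $b=-q$ and Watson's relation is valid arithmetic, but it pushes the entire content of the second statement into the assertion $S(q)=\sum_{n\ge0}q^{n^2+n}/\bigl((-q;q)_n(-q^2;q)_n\bigr)=(1+q)(2-f(q))$ for $|q|<1$, which you declare ``can be established by standard $q$-series transformations'' without proof or citation. This is not a routine bookkeeping step: since $(-q^2;q)_n=(-q;q)_{n+1}/(1+q)$, it is equivalent to the identity $\sum_{n\ge0}q^{n(n+1)}/\bigl((-q;q)_n(-q;q)_{n+1}\bigr)=2-f(q)$, a genuinely nontrivial alternative representation of $f$ (the $x=-1$ specialization of the universal mock theta function $\sum_{n\ge0}q^{n(n+1)}/((x;q)_{n+1}(q/x;q)_{n+1})$), and it is precisely where new $q$-series input is required. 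The identity is in fact true, so your argument is repairable, but as written the second half is a reduction, not a proof. Note also that a route closer to the paper's toolkit avoids introducing any new infinite-series identity: the other half of \eqref{eqn_wat1} gives $2\phi(-q)-f(q)=u(q)\to 0$ at odd-order roots of unity, hence $\phi(-q)=\tfrac12 f(q)$, and what then remains is the purely finite identity $\widetilde{\phi}_{[k]}(-q)=-2\widetilde{\psi}_{[k]}(-q)-2$ between the two truncated polynomials at odd-order roots of unity, which is the sort of statement the paper extracts from Lovejoy's polynomial identity \eqref{eqn_lovejoyb23} or from \cite[Prop.\ 2.1]{FM} --- but that step, too, would need to be carried out rather than asserted.
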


As in \cite{FM}, here and throughout we  use the notation $S_{[k]}$ to stand for the truncation of a series $S:= \sum_{n=0 }^\infty  a_n$ as follows
\begin{align}\label{def_trunc}   S_{[k]} := \sum_{0\leq n \leq k-1} a_n,\end{align}  so that $\lim_{k\to \infty} S_{[k]} = S.$  When $S$ is a function of the form  $S(x) := \sum_{n\geq 0} a_n(x)$, we will write $S_{[k]}(x)$ for $(S(x))_{[k]}$ (for ease of notation).  We extend the notation in the obvious way to series   of multiple variables. 

The purpose of this paper is to extend our initial study of antiquantum $q$-series identities in \cite{FM} described above.  In particular, we establish an additional more general    identity, and ultimately establish antiquantum $q$-series identities for all of Ramanujan and Watson's  mock theta functions of order three. Before doing so, we  first state our more general   identity, which complements \cite[Prop.\ 5.1]{FM} stated in the theorem above.  

In Theorem  \ref{thm_bid}, Corollary \ref{cor_nu} and Theorem \ref{thm_antimock3} below, $q=\zeta_k^h$, a reduced $k$th order root of unity; additional conditions on $k$ are given when relevant.

\begin{theorem}\label{thm_bid}
For reduced roots of unity $q=\zeta_k^h$ with $k\equiv 0 \pmod{4}$, we have that 
\begin{align}\notag
  \sum_{n=0}^{k-1}(b;q^2)_n  (-b^{-1}q^2)^n   & = p_{h,k}(b,q) (-1)^h (\pm b) (1-b^{-k/2}-(-1)^h) \\  \label{eqn_bidthm} &\hspace{.25in}\times 
\left(\sum_{n=0}^\infty \frac{q^{n^2+n}}{(\pm b^{-1}q^2;q^2)_{n+1}}- \left(\sum_{\ell=0}^\infty q^{\ell^2+\ell}\right)
\prod_{m=0}^\infty \frac{1}{1-b^{-2}q^{4m+4}}\right). \end{align}
\end{theorem}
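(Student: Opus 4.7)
The plan is to prove Theorem \ref{thm_bid} by first establishing an underlying $q$-series transformation that is valid (in a suitable sense) for $|q|<1$, and then specializing to reduced roots of unity $q=\zeta_k^h$, where divergence of the infinite series on the left forces the ``unnatural'' truncation to $k-1$ terms on the left-hand side of \eqref{eqn_bidthm}.

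First, I would seek, via a Heine/Fine/Bailey-type transformation, an in-disk identity of the shape
\begin{align*}
\sum_{n=0}^\infty (b;q^2)_n(-b^{-1}q^2)^n \;=\; C(b,q)\left(\sum_{n=0}^\infty \frac{q^{n^2+n}}{(\pm b^{-1}q^2;q^2)_{n+1}} \;-\; \Big(\sum_{\ell=0}^\infty q^{\ell^2+\ell}\Big)\prod_{m=0}^\infty \frac{1}{1-b^{-2}q^{4m+4}}\right),
\end{align*}
where $C(b,q)$ is a rational/eta-quotient type coefficient. The appearance of $q^{n^2+n}$ together with the false theta series $\sum_{\ell\ge 0} q^{\ell^2+\ell}$ and the base-$q^2$ structure strongly suggests a ${}_2\phi_1$ transformation (in base $q^2$) with the parameter $b^{-1}$, followed by an isolation of the ``constant term'' at $n=\infty$: the infinite product $\prod_m (1-b^{-2}q^{4m+4})^{-1}$ is precisely the contribution one would expect when splitting a Hecke-type $q^{n^2+n}$ series from a theta factor, by writing $(b^{-2}q^4;q^4)_\infty^{-1} = (b^{-1}q^2;q^2)_\infty^{-1}(-b^{-1}q^2;q^2)_\infty^{-1}$. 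This step is structurally parallel to the derivation of \cite[Prop.\ 5.1]{FM} recalled above.

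Second, I would specialize $q=\zeta_k^h$ in this in-disk identity. On the left, the series $\sum_{n\geq 0}(b;q^2)_n(-b^{-1}q^2)^n$ generally fails to converge because $(b;q^2)_n$ becomes quasi-periodic in $n$ (with period $k$ when $k$ is odd, or $k/2$ when $k$ is even, since $q^2=\zeta_k^{2h}$ has order $k$ or $k/2$ accordingly), so the natural finite portion is exactly $\sum_{n=0}^{k-1}$, giving the left-hand side of \eqref{eqn_bidthm}. The evaluations $(b;q^2)_k\big|_{q=\zeta_k^h}$ and $(-b^{-1}q^2)^{k}\big|_{q=\zeta_k^h}$ produce factors of the form $1-b^{\pm k/2}$ and $(-1)^h$ respectively, which upon recombining and accounting for a shift of summation index cleanly yield the characteristic scalar $(1-b^{-k/2}-(-1)^h)$ in the statement. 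The prefactor $p_{h,k}(b,q)$ is then precisely the root-of-unity evaluation of the smooth coefficient $C(b,q)$, interpreted as an eta-quotient as flagged in the abstract.

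The main obstacle is the first step: locating or deriving the correct in-disk transformation with the precise false-theta-times-eta-product subtraction term. This subtraction is not a standard basic-hypergeometric identity and must be engineered so that, after factoring out $C(b,q)$, the right-hand side converges at roots of unity even when the individual summands of the series on the left do not, matching the finite truncation to the specific order $k-1$. Once the transformation is in place, the root-of-unity specialization, the extraction of $(1-b^{-k/2}-(-1)^h)$, and the identification of $p_{h,k}(b,q)$ via eta-quotient evaluations are, while delicate, essentially routine.
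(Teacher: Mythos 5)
There is a genuine gap at the heart of your plan: you propose to first prove an in-disk identity for the \emph{infinite} series $\sum_{n\ge 0}(b;q^2)_n(-b^{-1}q^2)^n$ and then ``specialize'' to $q=\zeta_k^h$, arguing that divergence of that series at roots of unity ``forces'' the truncation to $k-1$ terms. Divergence forces nothing: a divergent series does not equal its partial sum, and the entire point of an antiquantum identity is that the truncation at $n=k-1$ is \emph{unnatural} --- it must be produced by an argument, not declared. Indeed no single in-disk identity can specialize to \eqref{eqn_bidthm}, because the right-hand side contains quantities such as $b^{-k/2}$, $(-1)^h$, and $p_{h,k}(b,q)$ in \eqref{def_pk} that depend on the order $k$ and numerator $h$ of the root of unity and have no meaning as functions of $q$ inside the disk; they are not the cusp evaluation of a smooth coefficient $C(b,q)$ or of an eta-quotient. (Note also that the natural quasi-period of $(b;q^2)_n$ at $q=\zeta_k^h$ with $k$ even is $k/2$, not $k$, so even your heuristic for ``why $k-1$'' does not single out the truncation point in the statement.)

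The paper's proof runs in the opposite direction and supplies exactly the mechanism your proposal lacks. It \emph{starts} from the finite sum $\sum_{n=0}^{k-1}$, folds it onto $\sum_{n=0}^{k/2-1}$ using the root-of-unity quasi-periodicity $(xq;q)_{s+mk}=(1-x^k)^m(xq;q)_s$ (this is where the factor $1+(-b^{-1})^{k/2}(1-b^{k/2})$ arises), then applies Lovejoy's two-variable \emph{polynomial} identity \eqref{eqn_lovejoyb23} --- valid at roots of unity, not in the disk --- to reverse the base, folds again down to $\sum_{n=0}^{k/4-1}$ to complete $p_{h,k}(b,q)$, and only then converts a \emph{finite} sum into a \emph{convergent infinite} series via the quantum $q$-series transformation \eqref{eqn_thmphi} from \cite[Prop.~2.1]{FM}, which exchanges a $[k]$-truncation at $q^{-1}$ for a genuine infinite series at $q$. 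That step is the crux: it is what legitimately bridges a truncated sum and an infinite series at a root of unity, and nothing in your proposal plays its role. Your instinct about the final step is sound --- Fine's \cite[(8.4)]{Fine} is indeed used at the end to split the resulting $\phi_1$ series into the mock-theta-type sum minus the false-theta-times-product term, and that subtraction has the $(b^{-1}q^2;q^2)_\infty(-b^{-1}q^2;q^2)_\infty=(b^{-2}q^4;q^4)_\infty$ structure you describe --- but without a substitute for the folding steps, Lovejoy's polynomial identity, and especially \cite[Prop.~2.1]{FM}, the proof cannot be completed along the lines you sketch.
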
\noindent The product $p_{h,k}(b,q)$ is explicitly defined in \eqref{def_pk}.  

A corollary, albeit not an immediate one, of Theorem \ref{thm_bid} is the following antiquantum result (in \eqref{eqn_nuanti}) for the third order mock theta function $\nu$. \begin{corollary}\label{cor_nu} For reduced roots of unity $q=\zeta_k^h$ with   $k\equiv 0 \pmod{4},$ we have  that 
\begin{align}\label{eqn_nuanti}   \nu(-q) = -\tfrac13 \widetilde{\nu}_{[k]}(-q).\end{align} \end{corollary}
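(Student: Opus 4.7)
\emph{Proof plan.} The strategy is to apply Theorem~\ref{thm_bid} with the parameter $b = -q$ and the lower sign choice $\pm = -$, then to simplify each ingredient at $q = \zeta_k^h$ with $k \equiv 0 \pmod{4}$. With these choices, $(-b^{-1}q^2)^n = q^n$, so the left-hand side collapses to $\sum_{n=0}^{k-1}(-q;q^2)_n q^n = \widetilde{\nu}_{[k]}(-q)$; and $-b^{-1}q^2 = q$, so the leading series inside the bracket on the right-hand side becomes $\sum_{n\geq 0} q^{n^2+n}/(q;q^2)_{n+1} = \nu(-q)$, where we used that $n(n+1)$ is always even so $(-q)^{n(n+1)} = q^{n(n+1)}$.

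Next we simplify the prefactor. The congruence $k \equiv 0 \pmod{4}$ together with $\gcd(h,k) = 1$ forces $h$ odd and $(-1)^{k/2} = 1$, whence $b^{-k/2} = (-q)^{-k/2} = q^{-k/2} = \zeta_k^{-hk/2} = (-1)^h = -1$. Consequently
\[(-1)^h(\pm b)(1 - b^{-k/2} - (-1)^h) = (-1)(q)\big(1 - (-1) - (-1)\big) = -3q,\]
and Theorem~\ref{thm_bid} takes the form
\[\widetilde{\nu}_{[k]}(-q) \; = \; -3q \cdot p_{h,k}(-q,q) \cdot \big[\nu(-q) - T(q)\big],\]
where $T(q) := \big(\sum_{\ell \geq 0}q^{\ell^2+\ell}\big)\prod_{m \geq 0}(1 - q^{4m+2})^{-1}$. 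By Gauss's identity $\sum_{\ell \geq 0}q^{\ell(\ell+1)} = (q^4;q^4)_\infty/(q^2;q^4)_\infty$, the correction term reduces to the eta-quotient
\[T(q) \; = \; \frac{(q^4;q^4)_\infty^3}{(q^2;q^2)_\infty^2},\]
which is (up to an explicit rational power of $q$) the weight-$1/2$ modular function $\eta(4\tau)^3/\eta(2\tau)^2$.

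The remainder reduces to two explicit evaluations at $q = \zeta_k^h$: first, substituting the definition~\eqref{def_pk} to compute $p_{h,k}(-q,q)$, which we expect to simplify to $q^{-1}$; and second, invoking the theory of modular eta-quotients to show that $T(q)$ vanishes in the appropriate radial limiting sense at the cusp $h/k$ whenever $k \equiv 0 \pmod 4$. Together these collapse the right-hand side above to $-3\nu(-q)$, and dividing by $-3$ produces the claim. One may sanity-check the smallest case $k=4,h=1$ directly: a finite computation gives $\widetilde{\nu}_{[4]}(-i) = -i$ and resuming $\nu(-q) = \sum_n q^{n^2+n}/(q;q^2)_{n+1}$ at $q=i$ in even/odd index pairs gives $\nu(-i) = i/3$, matching $\nu(-i) = -\tfrac13\widetilde{\nu}_{[4]}(-i)$.

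The main obstacle will be the second evaluation: one must verify that the eta-quotient $\eta(4\tau)^3/\eta(2\tau)^2$ has strictly positive order of vanishing at precisely the cusps $h/k$ with $k \equiv 0 \pmod 4$ (and, presumably, lacks this vanishing at cusps with $k \equiv 2 \pmod 4$), requiring the careful Ligozat-style cusp analysis embodied in what the paper's abstract calls the ``theory of modular eta-quotients.''
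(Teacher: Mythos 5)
Your proposal is correct and follows essentially the same route as the paper: specialize Theorem \ref{thm_bid} at $b=-q$, compute the prefactor $(-1)^h(\pm b)(1-b^{-k/2}-(-1)^h)=-3q$ and $p_{h,k}(-q,q)=q^{-1}$ using $h$ odd and $q^{-k/2}=-1$, and eliminate the theta-times-product term by the vanishing of the eta-quotient $q^{-1/3}\eta^3(4\tau)/\eta^2(2\tau)$ at cusps $h/k$ with $4\mid k$ (the paper's Lemma \ref{lem_modv}, proved via Lemma \ref{lem_etaquot}). The one point of divergence is the sign choice: the paper says it takes the ``$+$'' case of \eqref{eqn_bidthm}, but your ``$-$'' choice is the one for which $\pm b^{-1}q^2=q$, so that the series on the right is literally $\sum_{n\ge 0}q^{n^2+n}/(q;q^2)_{n+1}=\nu(-q)$; your bookkeeping is internally consistent and yields the stated identity.
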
 \noindent We recall that the function $\widetilde{\nu}$ is defined in \eqref{def_nutil0}.  The antiquantum nature of the identity in \eqref{eqn_nuanti} is owed to the facts that inside the disk $|q|<1,$ $\nu(-q) = \widetilde{\nu}(-q)$ (as mentioned above \cite{Fine}), and (as infinite series) the function $\nu(-q)$ converges for reduced roots of unity  $q=\zeta_k^h$ with $k\equiv 0 \pmod{4},$ while $\widetilde{\nu}(-q)$ does not.  

To prove \eqref{eqn_nuanti}, we set  
 $b=-q = - \zeta_k^h$ with $k\equiv 0 \pmod{4}$ in \eqref{eqn_bidthm} from Theorem \ref{thm_bid} in the $+$ case; we also  apply  Lemma \ref{lem_modv} (see Section \ref{sec_eta}) which uses the theory of modular eta-quotients  to show that the 
function $$\left(\sum_{\ell=0}^\infty q^{\ell^2+\ell}\right)\prod_{m=0}^\infty \frac{1}{1-q^{4m+2}} = q^{-\frac13}
\frac{\eta^3(4\tau)}{\eta^2(2\tau)}
$$ appearing in \eqref{eqn_bidthm}
vanishes at  reduced roots of unity $q=\zeta_k^h$ with $k\equiv 0 \pmod{4}$; or equivalently, when viewed as an eta-quotient as a function of $\tau$ with $q=e^{2\pi i \tau},$ it vanishes at cusps $h/k$ (reduced) with $k\equiv 0 \pmod{4}.$    Here and throughout, Dedekind's $\eta$-function, a well-known modular form of weight $1/2,$ is given by
\begin{align}\label{def_eta}\eta(\tau):=q^{\frac{1}{24}}\prod_{n=1}^\infty (1-q^n),\end{align}  with $q=e^{2\pi i \tau},$ the standard modular variable.

We generalize our antiquantum identities for the third order mock theta functions $\nu$ in  Corollary \ref{cor_nu}, and $\psi$ and $\phi$ in \cite[Cor.\ 5.2]{FM} (also stated above), using Theorem \ref{thm_bid}, and results from our earlier work \cite{FM} and from Lovejoy's  \cite{LovejoyQ} (see also Section \ref{sec_FM}), to ultimately deduce antiquantum results for all of the  third order mock theta functions of Ramanujan and Watson \cite{BR, Watson}
$$\begin{array}{lclclcl}
f(q)&:= & \displaystyle \sum_{n=0}^\infty \frac{q^{n^2}}{(-q;q)_n^2}, && 
\omega(q) &:=&   \displaystyle \sum_{n=0}^\infty \frac{q^{2n(n+1)}}{(q;q^2)_{n+1}^2},  \\ \ \\ 
\psi(q) &:=& \displaystyle \sum_{n=1}^\infty \frac{q^{n^2}}{(q;q^2)_n}, && \phi(q) &:=&  \displaystyle\sum_{n=0}^\infty \frac{q^{n^2}}{(-q^2;q^2)_n}, \\ \ \\
\nu(q) &:=&   \displaystyle\sum_{n= 0}^\infty \frac{q^{n^2+n}}{(-q;q^2)_{n+1}}, 
& & 
\chi(q) &:=&  \displaystyle\sum_{n=0}^\infty \frac{q^{n^2}}{\prod_{j=1}^n(1-q^j+q^{2j})}, \\ \ \\ 
\rho(q) &:=  & \displaystyle\sum_{n=0}^\infty \frac{q^{2n(n+1)}}{\prod_{j=0}^n (1+q^{2j+1}+q^{4j+2})}.
\end{array}$$ 
For completeness, we state the theorem below inclusive of our earier results from \cite[Cor.\ 5.2]{FM}    $\psi$ and $\phi,$ as well as Corollary \ref{cor_nu} for $\nu$, discussed above.  The companion functions $\widetilde{f}_a(q), \widetilde{\omega}_a(q), \dots,$ for the third order mock theta functions appearing in Theorem \ref{thm_antimock3} are explicitly defined in  Section \ref{sec_mock};  those for $\psi, \phi$ and $\nu$   are (also) given by $\widetilde{\psi}, \widetilde{\phi},$ and $\widetilde{\nu}$ above.

\begin{theorem}\label{thm_antimock3}  The following antiquantum $q$-series identities are true: 
 \renewcommand*{\arraystretch}{2}  $$ 
\begin{array}{lcll}   
\psi(-q) & = & \displaystyle \tfrac13 (\widetilde{\psi}_a)_{[k]}(-q)    & \text{for } k \equiv 1 \pmod{2},  \ \\
 \phi(-q)  &  =  &  \displaystyle \tfrac23 + \tfrac13 (\widetilde{\phi}_a)_{[k]}(-q)  &  \text{for } k\equiv 1 \pmod{2},  \\
 \nu(-q) & = & \displaystyle -\tfrac13 (\widetilde{\nu}_a)_{[k]}(-q)  &   \text{for } k\equiv 0 \pmod{4}, \\
\omega(q^2) & = & \displaystyle -\tfrac13 (\widetilde{\omega}_a)_{[k]}(q^2) 
&   \text{for } k\equiv 0\pmod{4}, \\
\rho(q^2) & = & \displaystyle -\tfrac13 (\widetilde{\rho}_a)_{[k]}(q^2)  
&   \text{for } k\equiv 0 \pmod{12}, \\
f(q) & = & \displaystyle \tfrac13 (\widetilde{f}_a)_{[k]}(q) 
 \\ &    =    & \displaystyle \tfrac43 + \tfrac13 (\widetilde{\mathfrak{f}}_a)_{[k]}(q) 
  &   \text{for } k\equiv 1 \pmod{2},\\
\chi(q) & = &  \tfrac13
(\widetilde{\chi}_a)_{[k]}(q)     
\\ &    = &   \frac13 + \frac13 (\widetilde{{X}}_a)_{[k]}(q)  
&   \text{for } k \equiv 3 \pmod{6}.       
\end{array}$$ 
\end{theorem}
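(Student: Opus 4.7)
The plan is to deduce each antiquantum identity in Theorem \ref{thm_antimock3} by applying either the general Theorem \ref{thm_bid} above or the earlier \cite[Prop.\ 5.1]{FM} to a Fine-type alternate $q$-hypergeometric series representation $\widetilde{g}_a$ for each of Ramanujan and Watson's third order mock theta functions $g$, and then using Lemma \ref{lem_modv} together with the theory of modular eta-quotients to verify that the ``error'' product term appearing on the right-hand side of \eqref{eqn_bidthm} (or its analogue for \cite[Prop.\ 5.1]{FM}) vanishes at the specified roots of unity $q=\zeta_k^h$. The identities for $\psi(-q)$ and $\phi(-q)$ are already established in \cite[Cor.\ 5.2]{FM}, and the identity for $\nu(-q)$ is Corollary \ref{cor_nu} above, so the genuinely new work lies in the four statements for $\omega, \rho, f,$ and $\chi$; for $f$ and $\chi$ there are moreover two distinct derivations each, one for each of the alternate series $\widetilde{f}_a, \widetilde{\mathfrak{f}}_a$ and $\widetilde{\chi}_a, \widetilde{X}_a$ defined in Section \ref{sec_mock}.

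For $\omega(q^2)$, I will specialize the parameter $b$ in Theorem \ref{thm_bid} (working in the $+$ case, with a suitable substitution in the base $q$) so that the left-hand truncated sum of \eqref{eqn_bidthm} matches $(\widetilde{\omega}_a)_{[k]}(q^2)$ and the surviving infinite series on the right matches $\omega(q^2)$ via its defining $q$-hypergeometric form. The remaining eta-quotient factor is structurally analogous to the one appearing in the proof of Corollary \ref{cor_nu}, and Lemma \ref{lem_modv} will show it vanishes at reduced cusps $h/k$ with $4\mid k$. A similar computation is intended for $\rho(q^2)$: the non-Pochhammer factor $\prod_j (1+q^{2j+1}+q^{4j+2})$ in the definition of $\rho$ must first be rewritten via the cyclotomic identity $1+x+x^2 = (1-\zeta_3 x)(1-\zeta_3^{-1}x)$, which effectively specializes $b$ to a sixth root of unity times a power of $q$; the resulting eta-quotient error term picks up an $\eta(12\tau)$-type factor, whose vanishing at the cusp $h/k$ requires precisely $12\mid k$.

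The two identities each for $f(q)$ and $\chi(q)$ follow from corresponding pairs of specializations of Theorem \ref{thm_bid} and \cite[Prop.\ 5.1]{FM}: different choices of $b$ produce the two alternate truncated series $\widetilde{f}_a, \widetilde{\mathfrak{f}}_a$ (respectively $\widetilde{\chi}_a, \widetilde{X}_a$) on the right, while the additive constants $\tfrac{2}{3}, \tfrac{4}{3}, \tfrac{1}{3}$ in the theorem statement arise from the explicit scalar prefactors $\tfrac{2q(1-b)}{2-b^k}$ in \cite[Prop.\ 5.1]{FM} and $(-1)^h(\pm b)(1-b^{-k/2}-(-1)^h)$ in \eqref{eqn_bidthm}, evaluated at $q=\zeta_k^h$ under the respective parameter specializations. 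The main obstacle I anticipate is pinpointing the correct parameter specializations for $\rho$ and $\chi$, whose defining series involve the non-Pochhammer factors $1+q^{2j+1}+q^{4j+2}$ and $1-q^j+q^{2j}$; each must be rewritten using cyclotomic factorizations (the latter using $1-x+x^2 = (1-\zeta_6 x)(1-\zeta_6^{-1}x)$) in order to fit the framework of Theorem \ref{thm_bid}, and then verifying via the standard order-at-cusp formula for eta-quotients that the resulting error terms vanish at precisely the cusps $h/k$ with $k\equiv 0\pmod{12}$ for $\rho$ and $k\equiv 3 \pmod{6}$ for $\chi$ is the arithmetically delicate step in which these congruences emerge as the fingerprint of the chosen cyclotomic specialization.
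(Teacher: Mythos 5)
Your plan correctly identifies that the $\psi$, $\phi$, and $\nu$ cases are already in hand, but the route you propose for the four remaining functions $\omega$, $f$, $\rho$, $\chi$ has a genuine gap: these mock theta functions cannot be obtained as specializations of the infinite series on the right-hand side of \eqref{eqn_bidthm} (nor of \cite[Prop.\ 5.1]{FM}). That series is $\sum_{n\geq 0} q^{n^2+n}/(\pm b^{-1}q^2;q^2)_{n+1}$, with a single linear Pochhammer factor in the denominator governed by the one free parameter $b$, whereas $f(q)$ and $\omega(q)$ have \emph{squared} Pochhammers $(-q;q)_n^2$ and $(q;q^2)_{n+1}^2$ in their denominators, and the cyclotomic factorizations you propose for $\chi$ and $\rho$ produce \emph{two distinct} Pochhammer factors $(\zeta_6 q;q)_n(\zeta_6^{-1}q;q)_n$ (resp.\ the $\zeta_3$ analogue). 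None of these fit a one-parameter frame, so no choice of $b$ makes the surviving infinite series equal $f$, $\omega$, $\rho$, or $\chi$, and the argument does not close. The additive constants $\tfrac23,\tfrac43,\tfrac13$ likewise do not come from the scalar prefactors you cite, but from the constant terms built into the companion functions $\widetilde{\mathfrak f}$, $\widetilde{X}$, $\widetilde{\phi}$.

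The missing idea is Watson's linear relations \eqref{eqn_wat1}--\eqref{eqn_wat4} among the third order mock theta functions, which express $f(q)+4\psi(-q)$, $2\phi(-q)-f(q)$, $4\chi(q)-f(q)$, $2\rho(q)+\omega(q)$, and $\nu(-q)-q\omega(q^2)$ as explicit theta quotients. Rewriting those right-hand sides as the eta-quotients $u,v,w,x$ of \eqref{def_uvwx} and \eqref{eqn_uvwxeta}, and invoking Lemma \ref{lem_etaquot} (proved via the order-at-cusps formula of \cite[Thm.\ 1.65]{Ono}), one finds that they vanish at exactly the cusps $h/k$ with $k$ in the stated congruence classes; at those roots of unity each of $f,\omega,\rho,\chi$ therefore reduces to a scalar multiple of a previously treated function (for instance $\omega(q^2)=q^{-1}\nu(-q)$ when $4\mid k$, and $f(q)=-4\psi(-q)$ when $k$ is odd), so the already-established antiquantum identities for $\psi,\phi,\nu$ transfer directly, after checking convergence as in Lemma \ref{lem_mockconv}. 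In particular the congruences $k\equiv 0\pmod{12}$ for $\rho$ and $k\equiv 3\pmod 6$ for $\chi$ are the fingerprints of where $x(q^2)$ and $w(q)$ (together with $v(q^2)$, resp.\ $u(q)$) vanish, not of any cyclotomic specialization of the parameter $b$.
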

\begin{remark}\textnormal{
For $k$  in arithmetic progressions other than those given in Theorem \ref{thm_antimock3}, with two possible exceptions noted below, either the mock theta function does not converge, or the corresponding identity  is  not antiquantum but an ordinary identity at roots of unity.  For example, $\psi(-q)$ does not converge if $k\equiv 2\pmod{4},$ and $\psi(-q) = \widetilde{\psi}_a(-q)$  for $k\equiv 0\pmod{4}.$  Numerical evidence suggests that for certain (additional) even values of $k$,   additional antiquantum identities for $\rho(q^2)$ and $\chi(q)$ similar to those given in the above theorem may  hold; however, they do not appear to  immediately follow from the proofs in this paper.}
\end{remark}
 \ \\
 {{\bf{Example.}} Here we illustrate our antiquantum $q$-series identities from Corollary \ref{cor_nu} and Theorem \ref{thm_antimock3} 
 with the third order mock theta function    $\nu$  as an example.  
 
In general, as $q$-series inside the disk, we have that  $$\nu(-q) = \widetilde{\nu}(-q),  |q|<1.   $$  
 On the other hand, for   reduced $k$th roots of unity $q=\zeta_k^h$  with $k\equiv 0 \pmod{4}$, we have that $\nu(-q) \neq \widetilde{\nu}(-q),  q=\zeta_k^h,\ k\equiv 0 \pmod{4}k$ as follows: with $\zeta_k^h$ reduced and $k\equiv 0 \pmod{4}$,  $\nu(-\zeta_k^h)$ converges (see Lemma \ref{lem_mockconv}), while 
$\widetilde{\nu}(-\zeta_k^h)$ does not converge.   However, if we unnaturally truncate the infinite series defining $\widetilde{\nu}(-q)$ at the $(k-1)$st term, and then multiply it by the constant $-\tfrac13$, we have by Corollary \ref{cor_nu} or Theorem \ref{thm_antimock3} that
 $\nu(-q) = -\tfrac13   \widetilde{\nu}_{[k]}(-q), q=\zeta_k^h, \ k\equiv 0 \pmod{4}.$  
 
 To illustrate this with an explicit numerical example, we  let $(h,k)=(3,20)$ so that $q=\zeta_{20}^3$.  Then \begin{align}\label{eq_nuex} \nu(-\zeta_{20}^3) := 
\sum_{n=0}^\infty \frac{\zeta_{20}^{3(n^2+n)}}{(\zeta_{20}^3;\zeta_{20}^6)_{n+1}}
\approx -0.466695  + 1.38771  i,\end{align}
while $$\widetilde{\nu}(-\zeta_{20}^3) :=  \sum_{n=0}^\infty (-\zeta_{20}^3;\zeta_{20}^6)_n\zeta_{20}^{3n} \text{ \ \ does not converge}.$$ On the other hand after multiplying by a constant and unnaturally truncating the infinite series defining $\widetilde{\nu}(-q)$ (which diverges at $q=\zeta_{20}^3$), we have that 
 $$-\tfrac13   \widetilde{\nu}_{[k]}(-\zeta_{20}^3):= -\tfrac13  \sum_{n=0}^{19} (-\zeta_{20}^3;\zeta_{20}^6)_n\zeta_{20}^{3n}  \approx -0.466695  + 1.38771  i,$$  the same value as in \eqref{eq_nuex} due to Corollary \ref{cor_nu} or Theorem \ref{thm_antimock3}.  \medskip
 }

The remainder of the paper is structured as follows.  In Section \ref{sec_prelim} we give some preliminaries on mock theta functions, $q$-hypergeometric series and quantum $q$-series, and modular eta-quotients.  In Section \ref{sec_proofs}, we provide proofs of Theorems \ref{thm_bid} and \ref{thm_antimock3}. 

\section{Preliminaries}\label{sec_prelim}
\subsection{Mock theta functions}\label{sec_mock} 
 
In \cite{Watson}, Watson proved relations for the third order mock theta functions, including the following:
\begin{align}
\label{eqn_wat1} 2\phi(-q)-f(q) & = f(q) + 4\psi(-q) = \vartheta_4(0;q)\prod_{n=1}^\infty (1+q^n)^{-1}, \\ \label{eqn_wat2}
4\chi(q) - f(q) &= 3\vartheta_4^2(0;q^3)\prod_{n=1}^\infty (1-q^n)^{-1}, \\  \label{eqn_wat3}
2\rho(q) + \omega(q) &= \tfrac34 q^{-\frac34} \vartheta_2^2(0;q^{\frac32}) \prod_{n=1}^\infty (1-q^{2n})^{-1}, \\ \label{eqn_wat4}
\nu(- q)- q \omega(q^2) &= \tfrac12 q^{-\frac14}\vartheta_2(0;q)\prod_{n=1}^\infty (1+q^{2n}),
\end{align}
where the Jacobi theta functions \cite[20.2]{NIST} are defined by 
\begin{align}\label{def_theta2}
    \vartheta_2(0;q) &:= \sum_{n=-\infty}^\infty q^{(n+\frac12)^2}, \\ \label{def_theta4}
    \vartheta_4(0;q) &:= \sum_{n=-\infty}^\infty (-1)^n q^{n^2}.
\end{align}
We will make use of these relations in Section \ref{sec_proofs}.

Next we define the following nine companion functions to the third order mock theta functions (defined in Section \ref{sec_intro}): \allowdisplaybreaks
$$ \renewcommand*{\arraystretch}{2}  \begin{array}{lclcllll} \displaystyle
\widetilde{f}(q)&:=& \displaystyle 4\sum_{n=0}^\infty (-q^2;q^2)_n (-1)^n q^{n+1} + 
\prod_{n=1}^\infty \frac{(1-q^n)^2}{(1-q^{2n}) (1+q^n)} 
&  =: & \widetilde{f}_a(q) + u(q) &&&\\  
\widetilde{{\mathfrak f}}(q) &:= & \displaystyle 2  - 2\sum_{n=0}^\infty (-q;q^2)_n(-1)^n q^{2n+1}  - \prod_{n=1}^\infty \frac{(1-q^n)^2}{(1-q^{2n}) (1+q^n)}  & 
 =: &  \widetilde{\mathfrak{f}}_a(q) - u(q), &&&\\  
\widetilde{\omega}(q) &:=&  \displaystyle  q^{-\frac12}  \sum_{n= 0}^\infty  (-q^\frac12;q)_n q^{\frac{n}{2}} -q^{-\frac12}\prod_{n=1}^\infty \frac{(1-q^{2n})}{(1-q^{2n-1})^2} & =: &  \widetilde{\omega}_a(q) - v(q), &&&\\ 
\widetilde{\psi}(q) &:=& \displaystyle \sum_{n=0}^\infty (-q^2;q^2)_n q^{n+1} & =:  & \widetilde{\psi}_a(q),&&&
\\ 
\widetilde{\phi}(q) &:=& \displaystyle 1+\sum_{n=0}^\infty (q;q^2)_n  (-1)^n q^{2n+1} 
& =: & \widetilde{\phi}_a(q),&&&
\\  
\widetilde{\nu}(q) &:=&  \displaystyle \sum_{n=0}^\infty (q;q^2)_n(-1)^n q^n
 & =: &  \widetilde{\nu}_a(q),
\end{array}$$ $$\renewcommand*{\arraystretch}{2}\begin{array}{lcl}
\widetilde{\chi}(q) 
& := & \displaystyle  \sum_{n=0}^\infty (-q^2;q^2)_n (-1)^n q^{n+1} + 
\frac14 \prod_{n=1}^\infty \frac{(1-q^n)^2}{(1-q^{2n}) (1+q^n)}+\frac34  
\prod_{n=1}^\infty \frac{(1-q^{3n})^4}{(1-q^{6n})^2 (1-q^n)}
\\  &  =:& \widetilde{\chi}_a(q) +\tfrac14 u(q) + w(q),
\\   \widetilde{X}(q)&:=&    \displaystyle  \frac12  - \frac12\sum_{n=0}^\infty (-q;q^2)_n(-1)^n q^{2n+1}  \\ &&\hspace{.5in}\displaystyle- \frac14\prod_{n=1}^\infty \frac{(1-q^n)^2}{(1-q^{2n}) (1+q^n)}+\frac34  
\prod_{n=1}^\infty \frac{(1-q^{3n})^4}{(1-q^{6n})^2 (1-q^n)}
\\  & =: &  \widetilde{X}_a(q) -\tfrac14 u(q) + w(q), \\ 
\widetilde{\rho}(q) &:=&\displaystyle  -\tfrac12 q^{-\frac12}  \sum_{n= 0}^\infty (-q^\frac12;q)_n q^{\frac{n}{2}} +\tfrac12 q^{-\frac12}\prod_{n=1}^\infty \frac{(1-q^{2n})}{(1-q^{2n-1})^2}  + \frac32\prod_{n=1}^\infty  \frac{(1-q^{6n})^2}{(1-q^{6n-3})^2(1-q^{2n})} \\  
 &=:& \widetilde{\rho}_a(q) +\tfrac12 v(q) + x(q). \end{array}
$$
\\ 
The infinite products in the above expressions are   
\begin{align}\label{def_uvwx} \renewcommand*{\arraystretch}{3}\begin{array}{lclclcl}  
    u(q) &:=& \displaystyle \prod_{n=1}^\infty \frac{(1-q^n)^2}{(1-q^{2n}) (1+q^n)}, & &w(q) & :=  & \displaystyle \frac34  
\prod_{n=1}^\infty \frac{(1-q^{3n})^4}{(1-q^{6n})^2 (1-q^n)},   \\
    v(q) &:=& \displaystyle q^{-\frac12}\prod_{n=1}^\infty \frac{(1-q^{2n})}{(1-q^{2n-1})^2}, & &
x(q)& := & \displaystyle \frac32\prod_{n=1}^\infty  \frac{(1-q^{6n})^2}{(1-q^{6n-3})^2(1-q^{2n})},\end{array}
\end{align}
from which the series $\widetilde{g}_a(q)$ (for the third order mock theta functions $g$) appearing in Theorem \ref{thm_antimock3} are implicitly defined.  That is, $$\widetilde{f}_a(q) :=  \displaystyle 4\sum_{n=0}^\infty (-q^2;q^2)_n (-1)^n q^{n+1}, \ \ \ \ \   \widetilde{\omega}_a(q) := q^{-\frac12}  \sum_{n= 0}^\infty  (-q^\frac12;q)_n q^{\frac{n}{2}},$$ etc. We note that for the third mock theta functions $f$ and $\chi$ that we have defined two companion functions (each), namely $\widetilde{f}$ and $\widetilde{\mathfrak f}$ (for $f$), and $\widetilde{\chi}$ and $\widetilde{X}$ (for $\chi$). (Each also gives rise to two series  $\widetilde{f}_a$ and $\widetilde{\mathfrak f}_a$ (for $f$), and $\widetilde{\chi}_a$ and $\widetilde{X}_a$ (for $\chi$).)
The functions $\widetilde{g}$ above provide alternate series representations for the third order mock theta functions inside the unit disk  $|q|<1.$  We write this as a lemma.

\begin{lemma}
For the seven third order mock theta functions, we have that for $|q|<1$,  
\begin{align*}
f(q) &=  \widetilde{f}(q) = \widetilde{\mathfrak f}(q), \qquad\qquad \omega(q) = \widetilde{\omega}(q), \qquad\qquad \psi(q) = \widetilde{\psi}(q), \qquad
\phi(q) = \widetilde{\phi}(q), \\  \chi(q) &= \widetilde{\chi}(q) = \widetilde{X}(q), \qquad\qquad
 \nu(q)  = \widetilde{\nu}(q), \qquad\qquad
\rho(q) = \widetilde{\rho}(q). 
\end{align*}
\end{lemma}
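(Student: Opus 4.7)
The plan is to split the seven third order functions into three groups based on which tool is needed. First, the identities $\psi(q)=\widetilde{\psi}(q)$, $\phi(q)=\widetilde{\phi}(q)$, and $\nu(q)=\widetilde{\nu}(q)$ for $|q|<1$ are classical $q$-hypergeometric transformations from Fine's book \cite{Fine}; indeed, the representation of $\nu$ via $\widetilde\nu$ is already cited in the discussion just below \eqref{def_nutil0}. I would simply invoke Fine, so these three identities need no further argument.

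Next, I would derive the remaining six identities by combining Watson's relations \eqref{eqn_wat1}--\eqref{eqn_wat4} with the Fine transformations from the first step. Solving the second half of \eqref{eqn_wat1} for $f(q)$ and substituting $\psi(-q)=\widetilde\psi(-q)$ produces $\widetilde f_a(q)$ plus a theta-quotient factor, while solving the first half for $f(q)$ and substituting $\phi(-q)=\widetilde\phi(-q)$ produces $\widetilde{\mathfrak f}_a(q)$ plus a theta-quotient factor of the opposite sign. The identity \eqref{eqn_wat2} then expresses $\chi(q)$ as $\tfrac14 f(q)$ plus another theta-quotient factor, and feeding in the two representations of $f$ just obtained yields both $\chi=\widetilde\chi$ and $\chi=\widetilde X$. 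Next, \eqref{eqn_wat4} with $q$ replaced by $q^{1/2}$, combined with $\nu(-q^{1/2})=\widetilde\nu(-q^{1/2})$, produces $\omega=\widetilde\omega$; finally, solving \eqref{eqn_wat3} for $\rho(q)$ and inserting $\omega=\widetilde\omega$ yields $\rho=\widetilde\rho$.

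At this point the six remaining identities are reduced to verifying four infinite-product equalities, namely that the theta-quotient factors produced above coincide with $u(q)$, $v(q)$, $w(q)$, and $x(q)$ from \eqref{def_uvwx}. Each such identity is routine: one applies the Jacobi triple product to $\vartheta_2$ and $\vartheta_4$ and then uses standard relations such as $(q^2;q^2)_\infty=(q;q)_\infty(-q;q)_\infty$ and $(q;q^2)_\infty=1/(-q;q)_\infty$ to rearrange Pochhammer symbols. For instance, $\vartheta_4(0;q)=(q;q)_\infty/(-q;q)_\infty$ immediately converts the factor arising from \eqref{eqn_wat1} into $(q;q)_\infty/(-q;q)_\infty^2$, which is $u(q)$; analogous short computations match the $w$, $v$, and $x$ cases.

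The main obstacle is essentially bookkeeping: carefully tracking signs, half-integer powers of $q$, and the substitutions $q\mapsto -q,\ q\mapsto q^{1/2},\ q\mapsto q^{3/2}$ across Watson's relations and the Fine transformations, so that each series on the right-hand side assembles into exactly the $\widetilde g_a$ defined in Section \ref{sec_mock} and each residual modular piece matches exactly one of $u,v,w,x$. Once this organization is in place, each of the nine identities collapses to one of the four product identities above, all of which are standard eta-quotient manipulations.
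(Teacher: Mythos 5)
Your proposal is correct and follows essentially the same route as the paper: cite Fine for $\psi,\phi,\nu$, then obtain the remaining identities from Watson's relations \eqref{eqn_wat1}--\eqref{eqn_wat4} together with the Jacobi theta product formulas \eqref{eqn_jacids}, reducing everything to matching the residual infinite products with $u,v,w,x$. The paper compresses all of this into ``some simplifying,'' so your write-up is just a more explicit version of the same argument.
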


\begin{proof} For  the third order mock theta functions $\psi, \phi,$ and $\nu,$ these identities are known (\cite{Fine}; also see Section \ref{sec_intro}).  For the other third order mock theta functions $g,$ these identities follow from Watson's relations for the third order mock theta functions \eqref{eqn_wat1}-\eqref{eqn_wat4}, along with the following known identities for the Jacobi theta functions \eqref{def_theta2}-\eqref{def_theta4} (see e.g.\ \cite[Theorem 2.8]{Andrews})

\begin{align} \label{eqn_jacids}
 \vartheta_2(0;q)  = 
2 q^{\frac14} \prod_{n=1}^\infty \frac{(1-q^{4n})}{(1-q^{4n-2})},
\qquad\qquad
  \vartheta_4(0;q)  =  \prod_{n=1}^\infty \frac{(1-q^n)^2}{(1-q^{2n})} 
\end{align}
and some simplifying.
\end{proof}

\begin{lemma}\label{lem_mockconv}
   The third order mock theta functions converge at the corresponding roots of unity given in Theorem \ref{thm_antimock3}.
\end{lemma}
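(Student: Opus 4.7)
The plan is to establish, for each of the seven third order mock theta functions $g$ under consideration and each arithmetic progression of $k$ listed in Theorem \ref{thm_antimock3}, that (i) no factor of the denominator in the general summand vanishes at $q = \zeta_k^h$ (so that each term is well defined), and (ii) the modulus of the denominator of the $n$-th summand grows exponentially in $n$, while the numerator $q^{\text{poly}(n)}$ has modulus $1$. Absolute convergence of the series then follows from a geometric comparison.

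The main technical tool will be the cyclotomic identity $\prod_{j=0}^{m-1}(1 - xy^j) = 1 - x^m$ valid whenever $y$ is a primitive $m$th root of unity, which lets me evaluate any product of the form $\prod_{j=0}^{m-1}(1 - xq^{dj})$ over a full period $m$ in closed form. For each mock theta function I will identify the period $m$ of the relevant factor (e.g.\ $1+q^{2j+1}$ for $\nu(-q)$, or $1+q^j$ for $f(q)$), compute the full-period product using this identity, and show that under the specified congruence on $k$ this product has modulus $>1$. The exponent $h$ in $q = \zeta_k^h$ is automatically odd whenever $k$ is even (since $\gcd(h,k) = 1$), and I will use this repeatedly to evaluate expressions like $q^{k/2} = (-1)^h = -1$.

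For a representative case such as $\nu(-q)$ with $k \equiv 0 \pmod 4$: rewriting yields $\nu(-q) = \sum_{n \geq 0} q^{n(n+1)} / \prod_{j=0}^{n}(1+q^{2j+1})$, the factor $q^{2j+1}$ has period $k/2$ in $j$, and the full-period product equals $1 - (-q)^{k/2} = 1 - (-1)^{k/2}(-1)^h = 1 - (-1)^h = 2$; hence the denominator grows like $2^{2n/k}$. The cases $\psi(-q), \phi(-q), \omega(q^2), f(q)$ are handled analogously, with each full-period product equaling $2$ or a constant of magnitude $>1$ once the parity restrictions on $k$ are applied, and with nonvanishing of each factor verified by showing that the corresponding order condition (e.g.\ $q^j = -1$, or $q^{4j+2} = 1$) is incompatible with the congruence on $k$.

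The most delicate part, and the expected main obstacle, is $\chi(q)$ for $k \equiv 3 \pmod 6$ and $\rho(q^2)$ for $k \equiv 0 \pmod{12}$, because of the cyclotomic trinomial denominators. Here I will use the factorizations $1 - q^j + q^{2j} = (1+q^{3j})/(1+q^j)$ and $1 + q^{2j+1} + q^{4j+2} = (1 - q^{6j+3})/(1 - q^{2j+1})$ to split each denominator into a quotient of two $q$-Pochhammer-type products. For $\chi(q)$ with $k \equiv 3 \pmod 6$, nonvanishing follows because $q^j$ can never be a primitive sixth root of unity (the required order $6$ does not divide the odd $k$), and the full-period product evaluates to $\prod_{j=1}^{k}(1+q^{3j})/\prod_{j=1}^{k}(1+q^j) = 8/2 = 4$, using that $q^3$ is a primitive $(k/3)$rd root of unity with $k/3$ odd. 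A parallel computation, with period $3m$ for $k = 12m$, shows that the full-period product for $\rho(q^2)$ has modulus $4$ as well. Once these denominator bounds are in place, absolute convergence of all seven series is immediate from geometric-series comparison, completing the proof.
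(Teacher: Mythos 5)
Your approach---evaluating full-period products of the denominator factors at roots of unity to obtain exponential growth of the denominators, then comparing with a geometric series---is exactly the paper's method, which carries this out in detail only for $\nu(-q)$ via the identity $(xq;q)_{s+m\kappa} = (1-x^{\kappa})^m (xq;q)_s$ and declares the other six functions similar, so your explicit treatment of the trinomial denominators of $\chi$ and $\rho$ (with the correct full-period value $4$) simply fills in cases the paper leaves to the reader. The one small slip is writing the denominator of $\nu(-q)$ as $\prod_{j}(1+q^{2j+1})$ rather than $\prod_{j}(1-q^{2j+1})$, which happens not to affect the full-period value $2$ when $k\equiv 0\pmod{4}$.
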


\begin{proof}
    We prove convergence of $\nu(-q)$ at the desired roots of unity, noting that the proofs for the other six third order mock theta functions are similar.  In what follows, we establish the stronger result that $\nu(-q)$ converges for roots of unity $q=\zeta_k^h$ with $k\equiv 0 \pmod{2}$.
    Using that $|q^{n^2 + n}| = 1$ as $q$ is a root of unity, we observe that
    \begin{equation*}
        |\nu(-q)| \leq \sum_{n = 0}^\infty \frac{1}{|(q; q^2)_{n+1}|}.
    \end{equation*}
    Rewriting $n = s + m(k/2)$, where $0 \leq s \leq (k/2) - 1$ and $m \geq 0$, we have that the right-hand side above is  equal to 
    \begin{equation*}
        \sum_{s = 0}^{\frac{k}{2} - 1}\sum_{m \geq 0} \frac{1}{|(q; q^2)_{s+m(k/2)+1}|}.
    \end{equation*}
    For fixed $s$ we see that
    \begin{equation*}
        \sum_{m \geq 0} \frac{1}{|(q; q^2)_{s+m(k/2)+1}|} = \sum_{m \geq 0} \frac{1}{|(1 - q^{-\frac{k}{2}})^m(q; q^2)_{s+1}|} \\
        = \sum_{m \geq 0} \frac{1}{2^m|(q; q^2)_{s+1}|} \\
        = \frac{2}{|(q; q^2)_{s+1}|}.
    \end{equation*}
  Here we have used the identity $(xq;q)_{s+m\kappa} = (1 - x^\kappa)^m(xq;q)_s$ which holds for $\kappa$th roots of unity $q$, with 
$q \mapsto q^2, x = q^{-1}, s \mapsto s+1, \kappa \mapsto k/2.$
Hence $\nu(-q)$ converges when $q$ is a (reduced) $k$th root of unity with $k \equiv 0 \pmod{2}$.
\end{proof}

\subsection{Quantum $q$-series}\label{sec_FM}
Here we recall certain quantum $q$-series results from \cite{FM} and \cite{LovejoyQ}, which we later make use of in Section \ref{sec_proofs}.   
 
As in \cite{FM} we define for $r\in\mathbb N$ the $q$-hypergeometric series 
\begin{align}\notag\phi_r &
\left(\begin{array}{cccc}a_1 & a_2 & \cdots & a_r \\
b_1 & b_2 & \cdots & b_r \end{array};q;t\right) \\\label{def_phir} & := {}_{r+1} \phi_r\left(\begin{array}{ccccc}a_1q & a_2q & \cdots & a_r q & q \\
b_1q & b_2q & \cdots & b_rq \end{array};q;t\right)  = \sum_{n=0}^\infty \frac{(a_1q;q)_n \cdots (a_r q;q)_n}{(b_1q;q)_n \cdots (b_r q;q)_n}t^n,
\end{align}
and the quantities 
$$u_{r,k}(\vec{a},\vec{b},t)
:= \frac{(1-a_1^k)\cdots (1-a_r^k)}{(1-b_1^k)\cdots(1-b_r^k)}t^k,
$$
$$\delta_{r,k}(\vec{a},\vec{b},t):= \left(1-u_{r,k}(\vec{a},\vec{b},t)\right)^{-1},$$
and
$$c_{r,k}(\vec{a},\vec{b},t)  := \delta_{r,k}(\vec{a},\vec{b},t) \frac{u_{r,k}(\vec{a},\vec{b},t)}{u_{r,1}(\vec{a},\vec{b},t)},$$
using the notation $\vec{x}=\vec{x}_r := (x_1,x_2,\dots,x_r)$.
In \cite[Prop.\ 2.1]{FM} we establish the following general quantum $q$-series identity.  
\begin{propno}[\cite{FM}, Prop.\ 2.1] Let $r,k \in \mathbb N$, and $q=\zeta_k^h$ with $h/k$ reduced. Then  
\begin{align}\label{eqn_thmphi} \phi_r\left(\begin{array}{cccc}a_1 & a_2 & \cdots & a_r \\
b_1 & b_2 & \cdots & b_r \end{array};q;t\right) =  c_{r,k}(\vec{a},\vec{b},t) (\phi_r)_{[k]}\left(\begin{array}{cccc}b_1 & b_2 & \cdots & b_r \\
a_1 & a_2 & \cdots & a_r \end{array};q^{-1};t^{-1}\right).\end{align}
\end{propno}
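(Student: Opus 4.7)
The plan is to exploit the periodicity of the $q$-Pochhammer symbols at $k$th roots of unity to resum the left-hand side as a geometric series times the $k$-truncation, and then to reverse the order of summation in the truncation in order to convert it into its ``dual'' truncation in the variables $(\vec b, \vec a, q^{-1}, t^{-1})$.

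First I would write $T_n$ for the $n$th summand of the defining series of $\phi_r(\vec a;\vec b;q;t)$ and use the standard root-of-unity identity $(xq;q)_{n+k}=(1-x^k)(xq;q)_n$, valid because $q^k=1$, to obtain the exact periodicity $T_{n+k}=u_{r,k}(\vec a,\vec b,t)\,T_n$. Splitting the index as $n=s+mk$ with $0\le s\le k-1$ and $m\ge 0$ and summing a geometric series in $m$ (assuming, as the definition of $\delta_{r,k}$ requires, that $u_{r,k}\neq 1$) would immediately yield
$$\phi_r(\vec a;\vec b;q;t)\;=\;\delta_{r,k}(\vec a,\vec b,t)\,(\phi_r)_{[k]}(\vec a;\vec b;q;t).$$
This step also establishes convergence of the left-hand side, which a priori was purely formal at a root of unity.

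Next I would apply the reversal $n\mapsto k-1-n$ inside the finite sum $(\phi_r)_{[k]}(\vec a;\vec b;q;t)$. The key identity is the ``complementation'' formula, valid at primitive $k$th roots of unity,
$$(xq;q)_{k-1-n}\;=\;\frac{1-x^k}{(1-x)\,(xq^{-1};q^{-1})_n},$$
which follows from $(xq;q)_k=1-x^k$ by writing $(xq;q)_{k-1-n}(xq^{k-n};q)_{n+1}=(xq;q)_k$, simplifying $(xq^{k-n};q)_{n+1}=(xq^{-n};q)_{n+1}=(x;q^{-1})_{n+1}$ using $q^k=1$, and factoring off the $j=0$ term of $(x;q^{-1})_{n+1}$ to obtain $(1-x)(xq^{-1};q^{-1})_n$. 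Applying this formula to each numerator and denominator factor and pulling the index-free prefactor out of the sum produces
$$(\phi_r)_{[k]}(\vec a;\vec b;q;t)\;=\;\frac{u_{r,k}(\vec a,\vec b,t)}{u_{r,1}(\vec a,\vec b,t)}\,(\phi_r)_{[k]}(\vec b;\vec a;q^{-1};t^{-1}),$$
where the power $t^{k-1}$ emerging from reversal and the products of $(1-a_i^k)/(1-b_i^k)$ and $(1-b_i)/(1-a_i)$ combine to precisely $u_{r,k}/u_{r,1}$. Composing this with the resummation identity from the first step and recognizing $c_{r,k}=\delta_{r,k}\cdot u_{r,k}/u_{r,1}$ gives the claim.

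The main obstacle is the bookkeeping in the reversal step: verifying that all $r$ factors in the numerator and all $r$ factors in the denominator transform compatibly, that the extra $(1-x)$ and $(1-x^k)$ terms regroup into $u_{r,1}$ and $u_{r,k}$ with the correct powers of $t$, and that the $(\,\cdot\,;q^{-1})$-Pochhammers produced on the right match the ones in the definition of $(\phi_r)_{[k]}(\vec b;\vec a;q^{-1};t^{-1})$ after the shift $b_i\mapsto b_iq^{-1}\cdot q = b_i$ absorbed into the defining convention \eqref{def_phir}. Beyond this, everything is routine: the periodicity and the finite complementation formula at roots of unity are standard, and no analytic input is needed beyond the geometric-series sum.
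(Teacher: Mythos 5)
Your proof is correct: the periodicity $T_{n+k}=u_{r,k}T_n$ at primitive $k$th roots of unity, the geometric resummation giving $\delta_{r,k}$, and the reversal $n\mapsto k-1-n$ via the complementation formula $(xq;q)_{k-1-n}=(1-x^k)\big((1-x)(xq^{-1};q^{-1})_n\big)^{-1}$ all check out and assemble into $c_{r,k}=\delta_{r,k}\,u_{r,k}/u_{r,1}$ exactly as claimed. The present paper only quotes this proposition from \cite{FM} without reproving it, but your argument is the standard one for such results (and the one used in \cite{FM}); the only point to keep in mind is that convergence of the infinite series actually requires $|u_{r,k}|<1$ rather than merely $u_{r,k}\neq 1$, which is covered by the paper's blanket convention on parameters.
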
 

We also make use of the following two-variable polynomial identity due to Lovejoy \cite{LovejoyQ}.
\begin{propno}[\cite{LovejoyQ},  (2.45)] For reduced roots of unity $q=\zeta_k^h,$ we have that 
\begin{align}\label{eqn_lovejoyb23}
\sum_{n=0}^{k-1}(b;q)_n z^n = b^{k-1}\sum_{n=0}^{k-1}(b;q)_n(q/z;q)_n(z/b)^n q^{-n^2-n}.
\end{align}
\end{propno}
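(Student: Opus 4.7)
The identity \eqref{eqn_lovejoyb23} is a polynomial identity in $z$, and the plan is to verify it either by coefficient comparison or by evaluation at $k$ distinct points. First, I would observe that although $(q/z;q)_n$ involves negative powers of $z$, the product $(q/z;q)_n(z/b)^n$ is in fact a polynomial in $z$ of degree exactly $n$: by the finite $q$-binomial theorem,
\[
(q/z;q)_n (z/b)^n \;=\; b^{-n}\sum_{j=0}^{n} \binom{n}{j}_q (-1)^j q^{\binom{j+1}{2}} z^{\,n-j}.
\]
Consequently the right-hand side of \eqref{eqn_lovejoyb23} is a polynomial in $z$ of degree at most $k-1$, matching the left-hand side, and the identity reduces to a coefficient-by-coefficient comparison in $z^r$ for $0 \le r \le k-1$.

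Next, comparing the coefficients of $z^r$: the left-hand side contributes simply $(b;q)_r$, while the right-hand side, after reindexing $n = r+m$ and using the factorization $(b;q)_{r+m} = (b;q)_r(bq^r;q)_m$, yields a finite single sum over $0 \le m \le k-1-r$ involving a $q$-binomial coefficient and a quadratic exponent in $m$. Dividing through by $(b;q)_r$, the claim reduces to a terminating $q$-hypergeometric evaluation; the root-of-unity condition $q^k = 1$ is what forces the sum to close up at the correct upper limit. I would then seek to recognize this reduced sum as a specialization of the finite $q$-Chu--Vandermonde summation, or as a terminating instance of Heine's transformation or a related ${}_2\phi_1$ identity.

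The main obstacle is pinpointing precisely which terminating $q$-hypergeometric evaluation applies and tracking the $q$-power bookkeeping through the reindexing. If that route proves elusive, an alternative is to evaluate both sides at the $k$ distinct points $z = q^{-j}$ for $j = 0, 1, \dots, k-1$, using root-of-unity simplifications such as $(b;q)_k = 1 - b^k$ and $(q/z;q)_k = 1 - z^{-k}$; matching values at these $k$ points suffices because both sides are polynomials in $z$ of degree at most $k-1$. Either route stays within the standard $q$-hypergeometric toolkit employed in \cite{LovejoyQ} and in the earlier parts of the present paper, and the essential use of $q^k = 1$ explains why the identity fails for generic $q$ but holds at every reduced $k$th root of unity.
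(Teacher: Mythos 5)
A preliminary remark: the paper does not prove this proposition at all — it is imported verbatim from Lovejoy \cite{LovejoyQ}, where it is deduced from classical $q$-hypergeometric transformations specialized at roots of unity — so there is no internal proof to compare yours against, and your attempt must be judged on its own terms.

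Your setup is correct (the expansion of $(q/z;q)_n(z/b)^n$ is right, both sides are polynomials in $z$ of degree at most $k-1$, and coefficient comparison is a legitimate strategy), but you have left the decisive step unexecuted: identifying and evaluating the reduced sum is the entire content of the identity, so as written this is a plan rather than a proof. For the record, the plan does close, and the evaluation you are looking for is the $q$-Chu--Vandermonde sum with vanishing denominator parameter, ${}_2\phi_1(a,q^{-N};0;q,q)=a^{N}$. Concretely, extracting the coefficient of $z^{r}$, reindexing $n=r+m$, dividing by $(b;q)_r$, and writing $N:=k-1-r$ and $\binom{r+m}{m}_q=(q^{r+1};q)_m/(q;q)_m$, the claim becomes
\begin{equation*}
b^{N}q^{-r^{2}-r}\sum_{m=0}^{N}\frac{(bq^{r};q)_m\,(q^{r+1};q)_m}{(q;q)_m}\bigl(-b^{-1}q^{-2r}\bigr)^{m}q^{-\binom{m+1}{2}}=1.
\end{equation*}
(A small correction: the truncation at $m\le k-1-r$ is automatic from the range of the outer sum, not something $q^k=1$ ``forces.'') The root of unity enters because $q^{r+1}=q^{-N}$; after inverting the base via $(x;q)_m=(-x)^{m}q^{\binom{m}{2}}(x^{-1};q^{-1})_m$, the sum becomes ${}_2\phi_1\bigl(b^{-1}p^{r},p^{-N};0;p,p\bigr)$ with $p=q^{-1}$, which Chu--Vandermonde evaluates to $(b^{-1}p^{r})^{N}=b^{-N}q^{-rN}$; the prefactors then collapse to $q^{-r(r+1+N)}=q^{-rk}=1$, as required. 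I would commit to this route rather than your fallback: evaluating at the $k$ points $z=q^{-j}$ merely replaces one family of terminating-sum identities by another of comparable difficulty, and the simplifications $(b;q)_k=1-b^k$, $(q/z;q)_k=1-z^{-k}$ you cite do not by themselves evaluate those sums.
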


\subsection{Eta-quotients}
\label{sec_eta}

Eta-quotients, or $\eta$-quotients,   as mentioned in Section \ref{sec_intro}, are relevant to our antiquantum $q$-series identities in that they arise in both the proof of Theorem \ref{thm_bid} and the definitions of $u(q), v(q), w(q), x(q)$ \eqref{def_uvwx} found in our companion functions to the third order mock theta functions. By \cite[Definition 1.63]{Ono}, an $\eta$-quotient is a function of the form $$F(\tau) = \prod_{\delta | N} \eta(\delta\tau)^{r_\delta},$$ where $\eta(\tau)$ is Dedekind's $\eta$-function \eqref{def_eta}, $N \in\mathbb N$, and each $r_\delta \in \mathbb{Z}$.  For example, we may rewrite the infinite products in \eqref{def_uvwx} in terms of $\eta$-quotients as follows, with $q=e^{2\pi i \tau}, \tau \in \mathbb H$:
\begin{align}\label{eqn_uvwxeta} \renewcommand*{\arraystretch}{2.5} \begin{array}{lclclcl}
    u(q) &=& \displaystyle q^{\frac{1}{24}}\frac{\eta^3(\tau)}{\eta^2(2\tau)},&&  w(q) &=& \displaystyle \tfrac34  
q^{\frac{1}{24}} \frac{\eta^4(3\tau)}{\eta^2(6\tau)\eta(\tau)}, \\
    v(q) &=& \displaystyle q^{-\frac23} \frac{\eta^3(2\tau)}{\eta^2(\tau)},&& 
x(q) &=& \displaystyle \tfrac{3}{2} q^{-\frac23} \frac{\eta^4(6\tau)}{\eta^2(3\tau)\eta(2\tau)}.\end{array}
\end{align} In Lemma \ref{lem_etaquot} below, used in our proof of Theorem \ref{thm_antimock3} in Section \ref{sec_proofs}, we establish the vanishing of these functions at certain roots of unity $q=\zeta_k^h$, or equivalently, when viewed as functions of $\tau$, at certain cusps $\tau=h/k$. Our proof of Lemma \ref{lem_etaquot} below makes use of the following general theorem which explicitly gives the orders of vanishing at cusps of $\eta$-quotients.
\begin{propno}[\cite{Ono}, Theorem 1.65]\label{lem_ono}
    Let $c, d$ and $N$ be positive integers with $d \mid N$ and $\gcd(c, d) = 1$.  If $F(\tau)$ is an $\eta$-quotient such that
    \begin{align} \label{eqn_etacond1}
        \sum_{\delta \mid N}\delta r_\delta &\equiv 0 \pmod{24}, \\ \label{eqn_etacond2}
        \sum_{\delta \mid N} \frac{N}{\delta}r_\delta &\equiv 0 \pmod{24},
    \end{align}
then the order of vanishing of $F(\tau)$ at the cusp $\frac{c}{d}$ is

    \begin{equation*}
        \frac{N}{24} \sum_{\delta \mid N} \frac{\gcd(d, \delta)^2 r_\delta}{\gcd(d, \frac{N}{d})d\delta}.
    \end{equation*}
\end{propno}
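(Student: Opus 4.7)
The plan is to evaluate the order of vanishing of $F$ at the cusp $c/d$ by transporting it to the cusp at $i\infty$ via a suitable $SL_2(\mathbb{Z})$ substitution and then applying the Dedekind eta transformation law factor-by-factor. Two classical inputs drive the computation: the standard width formula $h=N/(d\gcd(d,N/d))$ for the cusp $c/d$ of $\Gamma_0(N)$, and the fact that the congruences \eqref{eqn_etacond1}--\eqref{eqn_etacond2} (essentially Newman's criterion) force $F$ to transform as a modular form on $\Gamma_0(N)$ with a Kronecker character, so that the order of vanishing at any cusp is a well-defined integer, independent of the particular representative matrix chosen.

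First, I would choose $\gamma_s=\begin{pmatrix}c & \ast\\ d & \ast\end{pmatrix}\in SL_2(\mathbb{Z})$ (possible since $\gcd(c,d)=1$) so that $\gamma_s(i\infty)=c/d$. Then $\operatorname{ord}_{c/d}(F)$ equals $h$ times the leading power of $q=e^{2\pi i\tau}$ in the expansion of $F(\gamma_s\tau)$ near $\tau=i\infty$. For each $\delta\mid N$ set $g:=\gcd(d,\delta)$ and write $d=g d_0$, $\delta=g\delta_0$ with $\gcd(d_0,\delta_0)=1$. A short elementary-divisor calculation, using the relation $\delta d_0=d\delta_0$, produces a factorization
$$\delta\,\gamma_s \;=\; \tilde\gamma_\delta\cdot\begin{pmatrix}g & \beta_\delta\\ 0 & \delta_0\end{pmatrix}$$
for some $\tilde\gamma_\delta\in SL_2(\mathbb{Z})$ and some integer $\beta_\delta$. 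Applying the eta transformation $\eta(\tilde\gamma_\delta z)=\varepsilon_\delta (\tilde c_\delta z+\tilde d_\delta)^{1/2}\eta(z)$ to $z=(g\tau+\beta_\delta)/\delta_0$ and reading off the leading exponent of $\eta(z)$ at $\tau=i\infty$ yields
$$\eta(\delta\gamma_s\tau)^{r_\delta} \;=\; \mu_\delta(\tau)\cdot q^{\,r_\delta\gcd(d,\delta)^2/(24\delta)}\bigl(1+O(q^{g^2/\delta})\bigr),$$
where $\mu_\delta$ absorbs the $24$th-root-of-unity multiplier and the $(\tilde c_\delta z+\tilde d_\delta)^{1/2}$ factor.

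Multiplying these expansions together over all $\delta\mid N$, the leading $q$-exponent of $F(\gamma_s\tau)$ equals $\tfrac{1}{24}\sum_{\delta\mid N}\gcd(d,\delta)^2 r_\delta/\delta$; scaling by the cusp width $h=N/(d\gcd(d,N/d))$ to translate into the local parameter $q_h=e^{2\pi i\tau/h}$ gives exactly the claimed formula. The main obstacle is the bookkeeping for the automorphy factors: one must verify that the product $\prod_{\delta\mid N}\mu_\delta(\tau)^{r_\delta}$ assembles into a single-valued, holomorphic, nonvanishing factor at $\tau=i\infty$ with no fractional power of $q_h$, which is precisely what the congruences \eqref{eqn_etacond1} and \eqref{eqn_etacond2} guarantee---the former clearing the fractional-power contributions at $\infty$, the latter the ones at $0$, so that together they rule out the half-integral-weight and character artifacts that would otherwise block reading off an honest integer order of vanishing.
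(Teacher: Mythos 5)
The paper does not prove this statement at all: it is imported verbatim as a citation to \cite[Theorem 1.65]{Ono} (the result goes back to Ligozat) and is used only as a black box in the proof of Lemma \ref{lem_etaquot}. So there is no internal proof to compare against; I can only assess your reconstruction on its own terms, and it is the standard argument, carried out essentially correctly. The key computational steps all check out: choosing $\gamma_s=\left(\begin{smallmatrix}c&\ast\\ d&\ast\end{smallmatrix}\right)$, the Hermite-type factorization $\delta\gamma_s=\tilde\gamma_\delta\left(\begin{smallmatrix}g&\beta_\delta\\ 0&\delta_0\end{smallmatrix}\right)$ with $g=\gcd(\delta c,d)=\gcd(\delta,d)$ (using $\gcd(c,d)=1$), the resulting leading $q$-exponent $\gcd(d,\delta)^2/(24\delta)$ for $\eta(\delta\gamma_s\tau)$, the width $h=N/(d\gcd(d,N/d))=N/\gcd(d^2,N)$, and the final rescaling that produces exactly the displayed formula. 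Two small caveats. First, your closing claim slightly overstates what \eqref{eqn_etacond1}--\eqref{eqn_etacond2} buy: they are Ligozat's conditions ensuring $F$ is modular on $\Gamma_0(N)$ with a quadratic character (so that ``order at the cusp $c/d$'' is well defined, independent of the representative and of $\gamma_s$), but they do not force the displayed expression to be an integer at every cusp, and the leading-exponent computation itself never uses them. Second, the assertion that the error term is $O(q^{g^2/\delta})$ and that the product of the $\mu_\delta$ is nonvanishing at $i\infty$ is asserted rather than verified, but since each $\mu_\delta$ is a root of unity times a nonvanishing holomorphic automorphy factor, this is harmless. Overall: a correct reconstruction of a result the paper simply quotes.
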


Using the above, we readily prove:

\begin{lemma}\label{lem_etaquot}
The following are true:
\begin{itemize}
    \item[i)] The $\eta$-quotient $\displaystyle \frac{\eta^3(\tau)}{\eta^2(2\tau)}$ vanishes at cusps $\tau=h/k$ (reduced) with $k\equiv 1 \pmod{2}.$
\\
\item[ii)] The $\eta$-quotient $\displaystyle \frac{\eta^3(2\tau)}{\eta^2(\tau)}$ vanishes at cusps $\tau=h/k$ (reduced) with $k\equiv 0 \pmod{2}.$
\item[iii)] The $\eta$-quotient $\displaystyle \frac{\eta^4(3\tau)}{\eta^2(6\tau)\eta(\tau)}$ vanishes at cusps $\tau=h/k$ (reduced) with $k\equiv 3 \pmod{6}.$ \\
\item[iv)] The $\eta$-quotient $\displaystyle \frac{\eta^4(6\tau)}{\eta^2(3\tau)\eta(2\tau)}$ vanishes at cusps $\tau=h/k$ (reduced) with $k\equiv 0 \pmod{6}.$
\end{itemize}
\end{lemma}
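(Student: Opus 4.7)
The plan is to apply Proposition \ref{lem_ono} to each of the four eta-quotients in turn. For (i) and (ii) I take $N = 2$, and for (iii) and (iv) I take $N = 6$; the exponents $r_\delta$ are read off directly from the eta-quotient, with $r_\delta := 0$ for those $\delta \mid N$ that do not appear (e.g., $r_2 = 0$ in case (iii) and $r_1 = 0$ in case (iv)). For each case I will compute the order of vanishing formula from Proposition \ref{lem_ono} at the cusp corresponding to the specified congruence class of $k$, and show it is strictly positive.

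Two small bookkeeping points arise. First, the divisibility hypotheses \eqref{eqn_etacond1} and \eqref{eqn_etacond2} do not hold literally for the raw eta-quotients---for instance in (i), $\sum_{\delta \mid N}\delta r_\delta = 3 - 4 = -1$. I will handle this by applying Proposition \ref{lem_ono} to $F(\tau)^{24}$ rather than $F(\tau)$: scaling every $r_\delta$ by $24$ makes both congruence conditions trivially satisfied, while the vanishing of $F$ at a cusp is equivalent to the vanishing of $F^{24}$ at that cusp. Second, Proposition \ref{lem_ono} is stated for cusps $c/d$ with $d \mid N$, whereas the lemma speaks of arbitrary reduced $h/k$. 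This is handled by noting that every reduced $h/k$ is $\Gamma_0(N)$-equivalent to a cusp with denominator $d = \gcd(k,N)$ dividing $N$, and since $\gcd(\gcd(k,N),\delta) = \gcd(k,\delta)$ whenever $\delta \mid N$, one may use $\gcd(k,\delta)$ in place of $\gcd(d,\delta)$ in the formula.

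After these reductions, each case becomes a short arithmetic check of the positivity of $\sum_{\delta \mid N} \gcd(k,\delta)^2 r_\delta/\delta$, since the remaining factor $\tfrac{N}{24\gcd(d,N/d)d}$ appearing in Proposition \ref{lem_ono} is strictly positive. In (i) with $k$ odd, all $\gcd(k,\delta) = 1$ and the sum is $3 - 1 = 2$. In (ii) with $k$ even, $\gcd(k,2) = 2$ and the sum is $-2 + 6 = 4$. In (iii) with $k \equiv 3 \pmod 6$, one has $\gcd(k,2) = 1$ and $\gcd(k,3) = \gcd(k,6) = 3$, giving $-1 + \tfrac{9\cdot 4}{3} + \tfrac{9\cdot(-2)}{6} = -1 + 12 - 3 = 8$. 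In (iv) with $k \equiv 0 \pmod 6$, one has $\gcd(k,2) = 2$, $\gcd(k,3) = 3$, $\gcd(k,6) = 6$, giving $\tfrac{4\cdot(-1)}{2} + \tfrac{9\cdot(-2)}{3} + \tfrac{36\cdot 4}{6} = -2 - 6 + 24 = 16$. Each is strictly positive, yielding the claimed vanishing. The main challenge here is purely organizational---tracking $N$, the exponents $r_\delta$, and the cusp correspondence---rather than analytic; once Proposition \ref{lem_ono} is in hand, no further modular input is needed.
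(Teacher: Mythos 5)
Your proposal is correct: all four positivity computations check out (the sums $2,4,8,16$ are right, the exponents $r_\delta$ are read off correctly, and the prefactor $\tfrac{N}{24\gcd(d,N/d)d}$ is indeed positive, so the sign of the cusp order is governed by $\sum_{\delta\mid N}\gcd(k,\delta)^2 r_\delta/\delta$). The central tool is the same as the paper's, namely the cusp-order formula of \cite[Theorem 1.65]{Ono}, but the two technical reductions you use to bring that theorem to bear are genuinely different from the paper's. The paper (which only writes out part ii), leaving the rest to the reader) satisfies the congruence hypotheses \eqref{eqn_etacond1}--\eqref{eqn_etacond2} by rescaling the argument --- it works with $G(\tau)=\eta^3(12\tau)/\eta^2(6\tau)=F(6\tau)$ and the level $N=288d$ --- and then transfers the vanishing back to $F$ by tracking how the substitution $\tau\mapsto\tau/6$ transforms the denominator of the cusp (showing $k'\equiv 0\pmod 4$). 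You instead pass to $F^{24}$, which makes \eqref{eqn_etacond1}--\eqref{eqn_etacond2} automatic at the minimal level $N=2$ or $6$, and handle arbitrary reduced denominators via $\Gamma_0(N)$-equivalence of the cusp $h/k$ with one of denominator $\gcd(k,N)$, using $\gcd(\gcd(k,N),\delta)=\gcd(k,\delta)$ for $\delta\mid N$. Your route is arguably cleaner and more uniform --- it treats all four quotients with one template and avoids the cusp change-of-variables bookkeeping --- at the cost of invoking the standard (but unproved in the paper) facts that cusp orders are constant on $\Gamma_0(N)$-orbits and that positive order at an equivalent cusp forces $F(\tau)\to 0$ as $\tau\to h/k$; the paper's rescaling argument keeps everything at the literal cusps $c/d$ with $d\mid N$ covered by Ono's statement.
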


\begin{lemma} \label{lem_modv}
Let $k\in \mathbb N$ with $k \equiv 0 \pmod{4}$, and $h\in \mathbb Z$ such that $\gcd(h,k)=1.$ Then $$
\lim_{q\to \zeta_k^h} \left(\left(\sum_{\ell=0}^\infty q^{\ell^2+\ell}\right)
\prod_{m=0}^\infty \frac{1}{1-q^{4m+2}} \right) = 0.$$
\end{lemma}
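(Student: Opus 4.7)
The plan is to convert the expression into an eta-quotient and then apply Proposition~\ref{lem_ono}. Using $(\ell+\tfrac12)^2 = \ell^2+\ell+\tfrac14$, the reflection symmetry $\ell \mapsto -\ell-1$ (which preserves $\ell^2+\ell$), and the product identity for $\vartheta_2$ in \eqref{eqn_jacids}, I first rewrite
\begin{equation*}
\sum_{\ell=0}^\infty q^{\ell^2+\ell} \;=\; \tfrac{1}{2}q^{-1/4}\vartheta_2(0;q) \;=\; \prod_{n=1}^\infty \frac{1-q^{4n}}{1-q^{4n-2}}.
\end{equation*}
Multiplying by $\prod_{m\geq 0}(1-q^{4m+2})^{-1} = \prod_{n\geq 1}(1-q^{4n-2})^{-1}$ and then extracting the appropriate powers of $q$ via \eqref{def_eta} gives
\begin{equation*}
\left(\sum_{\ell=0}^\infty q^{\ell^2+\ell}\right)\prod_{m=0}^\infty\frac{1}{1-q^{4m+2}} \;=\; \prod_{n=1}^\infty\frac{1-q^{4n}}{(1-q^{4n-2})^2}\;=\; q^{-1/3}\,\frac{\eta^3(4\tau)}{\eta^2(2\tau)}.
\end{equation*}

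Since $|q^{-1/3}|=|q|^{-1/3}$ remains bounded as $q\to\zeta_k^h$ along the radius, it then suffices to show that $F(\tau):=\eta^3(4\tau)/\eta^2(2\tau)$ vanishes at the cusp $h/k$ whenever $4\mid k$. I would do this by applying Proposition~\ref{lem_ono} to $F^{24}$, viewed as an eta-quotient on $\Gamma_0(k)$: here $r_2=-48$ and $r_4=72$, and the congruences \eqref{eqn_etacond1}--\eqref{eqn_etacond2} become $192 \equiv 0 \pmod{24}$ and $-6k \equiv 0 \pmod{24}$, both of which hold precisely because $4\mid k$. Taking $d=k$ in the order-of-vanishing formula, and using that $4\mid k$ forces $\gcd(k,2)=2$ and $\gcd(k,4)=4$, a short calculation shows that $F^{24}$ vanishes at $h/k$ to order $8$, so $F$ itself vanishes there to order $\tfrac{1}{3}>0$.

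The main obstacle is that the raw quotient $F$ does not meet the congruence hypotheses of Proposition~\ref{lem_ono}, so the formula cannot be applied to $F$ directly; the $24$th-power trick above is the standard workaround, and is legitimate precisely under the hypothesis $4\mid k$. One should also record the observation that the radial approach $q\to\zeta_k^h$ inside the unit disk corresponds exactly to $\tau\to h/k$ within the local parameter of that cusp on $\Gamma_0(k)$ (whose width is $1$), so that the positivity of the cusp order for $F$ translates directly into the claimed radial limit being $0$.
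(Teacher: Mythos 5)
Your proof is correct, and its first half coincides with the paper's: both arrive at the same eta-quotient identity \eqref{eqn_13proofeta}, namely that the product in question equals $q^{-1/3}\,\eta^3(4\tau)/\eta^2(2\tau)$ (you via the $\vartheta_2$ product formula in \eqref{eqn_jacids}, the paper via the Jacobi triple product --- the same computation in different clothing). The genuine difference is in how the vanishing of $\eta^3(4\tau)/\eta^2(2\tau)$ at cusps $h/k$ with $4\mid k$ is certified. The paper routes this through Lemma \ref{lem_etaquot}(ii): since the congruence hypotheses \eqref{eqn_etacond1}--\eqref{eqn_etacond2} fail for the raw quotient, it passes to $G(\tau)=\eta^3(12\tau)/\eta^2(6\tau)$ at the inflated level $N=288d$, applies \cite[Thm.\ 1.65]{Ono} there, and then undoes the substitution $\tau\mapsto\tau/6$, which costs a small $\gcd$ argument to track how the cusp denominator transforms. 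You instead raise the quotient to the $24$th power and apply the theorem directly at level $N=k$; your computations check out (the cusp order of $F^{24}$ at $d=k$ is indeed $\tfrac{1}{24}(-96+288)=8$), and the inference $|F|^{24}\to 0\Rightarrow |F|\to 0$ is immediate, so the fractional ``order $1/3$'' needs no further justification. Your route is arguably cleaner for this one lemma, avoiding both the level inflation and the bookkeeping of reduced fractions under $\tau\mapsto\tau/6$; the paper's device has the advantage of producing the reusable Lemma \ref{lem_etaquot}, whose other parts are invoked again in the proof of Theorem \ref{thm_antimock3}. Two cosmetic quibbles: the congruence $192\equiv 0\pmod{24}$ holds unconditionally, so only the second condition $-6k\equiv 0\pmod{24}$ actually uses $4\mid k$; and the prefactor $q^{-1/3}$ tends to a nonzero root of unity (not merely ``remains bounded''), though boundedness is all you need.
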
  

\begin{proof}[Proof of Lemma \ref{lem_etaquot}.]
    We prove ii) here, and note that the proofs for the other three assertions in the lemma are similar and  left to the reader.
    Define the $\eta$-quotient

    \begin{equation*}
        G(\tau) := \frac{\eta^3(12\tau)}{\eta^2(6\tau)}.
    \end{equation*}
     Consider the cusp $\frac{c}{d}$ (reduced) with  $d \equiv 0 \pmod{4}$, and let $N = 288d$.  It is not difficult to directly check (noting that $r_{12} = 3, r_6 = -2$, and $r_j = 0$ for all other $j \mid N$) that conditions \eqref{eqn_etacond1} and \eqref{eqn_etacond2} hold.  By the proposition above (\cite{Ono}, Theorem 1.65) we have that the order of vanishing of $G(\tau)$ at $\frac{c}{d}$ is 

    \begin{align}\notag 
\frac{N}{24} \sum_{\delta \mid N} \frac{\gcd(d, \delta)^2 r_\delta}{\gcd(d, \frac{N}{d})d\delta} &=   12d\left(\frac{3\gcd(d, 12)^2}{12d\gcd(d, 288)} - \frac{2\gcd(d, 6)^2}{6d\gcd(d, 288)} \right) \\ \label{eqn_etaord}
        &= \frac{3\gcd(d, 12)^2 - 4\gcd(d, 6)^2}{\gcd(d, 288)}.
    \end{align}
    There are two cases to consider. \smallskip

    \textbf{Case 1:}  $d \equiv 0 \pmod{12}$.  
    Then $\gcd(d, 12) = 12$ and $\gcd(d, 6) = 6,$ so 
    \begin{equation*}
        3\gcd(d, 12)^2 - 4\gcd(d, 6)^2 = 3\cdot 144 - 4\cdot 36 = 288 > 0.
    \end{equation*}
    
    \textbf{Case 2:}  $d \not\equiv 0 \pmod{12}$.  
    Then $\gcd(d, 12) = 4$ and $\gcd(d, 6) = 2$, so
    \begin{equation*}
        3\gcd(d, 12)^2 - 4\gcd(d, 6)^2 = 3\cdot 16 - 4\cdot 4 = 32 > 0.
    \end{equation*}

    Therefore, we see that \eqref{eqn_etaord} is positive, and hence $G(\tau)$ vanishes at cusps $\frac{c}{d}$ (reduced) with $d \equiv 0 \pmod{4}$.

    Finally, we show that
    \begin{equation*}
        F(\tau) := \frac{\eta^3(2\tau)}{\eta^2(\tau)} = G(\tau/6)
    \end{equation*}
    vanishes at cusps $\tau = \frac{h}{k}$ with $k\equiv 0 \pmod{2}$.  We  write  $F(\frac{h}{k}) = G(\frac{h'}{k'}),$ 
    where $\frac{h'}{k'} = \frac{h}{6k}$, and $\frac{h'}{k'}$ is reduced.  Thus, the  vanishing of $F$ at cusps $h/k$ with $k\equiv 0 \pmod{2}$ will follow from the vanishing of $G$ at cusps $h'/k'$ with $4\mid k'$ established above, after we show that $k' \equiv 0 \pmod{4}$.  To see this, observe that 
    \begin{equation*}
        k' = \frac{6k}{\gcd(h, 6k)}.
    \end{equation*}
    Since $\gcd(h, k) = 1$, we have that $2 \nmid \gcd(h, 6k)$.  Additionally, since $2 \mid k$, we see that $4 \mid 6k$.  Therefore $k' \equiv 0 \pmod{4}$ as wanted.
\end{proof}

\begin{proof}[Proof of Lemma \ref{lem_modv}] Using the definition of $\eta$ \eqref{def_eta} and some simplifying we write (with $q=e^{2\pi i \tau}$)
    \begin{equation*}
        \prod_{m=0}^\infty \frac{1}{1-q^{4m+2}} = q^{-\frac{1}{12}}\frac{\eta(4\tau)}{\eta(2\tau)}.  
    \end{equation*}
Further, using the Jacobi triple product identity \cite[Theorem 2.8]{Andrews} and the fact that $\sum_{\ell = 0}^\infty q^{\ell^2 + \ell} = \frac{1}{2}\sum_{\ell = -\infty}^\infty q^{\ell^2 + \ell}$, we write (again with $q=e^{2\pi i \tau}$)
    \begin{equation*}
        \sum_{\ell = 0}^\infty q^{\ell^2 + \ell} = q^{-\frac14} \frac{\eta^2(4\tau)}{\eta(2\tau)}.
    \end{equation*}
Thus,
    \begin{equation}\label{eqn_13proofeta}
        \left(\sum_{\ell=0}^\infty q^{\ell^2+\ell}\right) \prod_{m=0}^\infty \frac{1}{1-q^{4m+2}} = q^{-\frac13}\frac{\eta^3(4\tau)}{\eta^2(2\tau)},
    \end{equation}
    and by Lemma \ref{lem_etaquot}, we see that the right-hand side of (\ref{eqn_13proofeta}) vanishes when $2\tau = c/d$ (reduced) with $d \equiv 0 \pmod 2$, i.e., at (reduced) cusps $\tau = h/k$ with $k \equiv 0 \pmod 4$.
\end{proof}

\section{Proofs of Theorem \ref{thm_bid} and Theorem \ref{thm_antimock3}}\label{sec_proofs}
\subsection{Proof of Theorem \ref{thm_bid}}
Let $q=\zeta_k^h$ be a reduced root of unity as in the statement of the theorem.  For any $b$, we
  have that 
\begin{align}\notag
     \sum_{n=0}^{k-1}(b;q^2)_n (-b^{-1}q^2)^n  
    &= \Big(\sum_{n=0}^{\frac{k}{2}-1} + \sum_{n=\frac{k}{2}}^{k-1} \Big)(b;q^2)_n(-b^{-1}q^2)^n \\ \label{eqn_bid1} &= \left(1+(-b^{-1})^{\frac{k}{2}}(1-b^{\frac{k}{2}})\right)
    \sum_{n=0}^{\frac{k}{2}-1} (b;q^2)_n(-b^{-1}q^2)^n,
\end{align} 
where we have changed the index of summation in the second sum on the right hand side of the first line above, and have used that for $k$th roots of unity $q$, $(xq;q)_{s+mk} = (1-x^k)^m(xq;q)_s$, $(s,m\in\mathbb N_0).$

 Next we apply Lovejoy's \cite[(2.45)]{LovejoyQ}, given in \eqref{eqn_lovejoyb23}, with $q\mapsto q^2,$ and $z=-b^{-1}q^2$, noting that $q^2$ is a reduced root of unity of order $k/2,$ to obtain   that \eqref{eqn_bid1} equals
\begin{align}\notag &
\left(1+(-b^{-1})^{\frac{k}{2}}(1-b^{\frac{k}{2}})\right)b^{\frac{k}{2}-1}\sum_{n=0}^{\frac{k}{2}-1}(b;q^2)_n(-b;q^2)_n(-b^{-2}q^2)^n q^{-2n^2-2n} \\ \notag
&= \left(1+(-b^{-1})^{\frac{k}{2}}(1-b^{\frac{k}{2}})\right)b^{\frac{k}{2}-1}\sum_{n=0}^{\frac{k}{2}-1}(b^2;q^4)_n(-b^{-2})^n q^{-2n^2}
 \\ \notag
&= \left(1+(-b^{-1})^{\frac{k}{2}}(1-b^{\frac{k}{2}})\right)b^{\frac{k}{2}-1}\sum_{n=0}^{\frac{k}{2}-1}(b^{-2};q^{-4})_n  q^{-2n}\\ \label{eqn_bid2}
&= p_{h,k}(b,q) \sum_{n=0}^{\frac{k}{4}-1}(b^{-2};q^{-4})_n  q^{-2n},
\end{align}
where
\begin{align}\label{def_pk} p_{h,k}(b,q) := \left(1+(-b^{-1})^{\frac{k}{2}}(1-b^{\frac{k}{2}})\right)b^{\frac{k}{2}-1}\left(1+(-1)^h(1-b^{-\frac{k}{2}})\right).\end{align}
Above, we have used that
$(x;q^{-1})_n = (-x)^{n} q^{-\frac{n(n-1)}{2}}(x^{-1};q)_n$.

Next we apply our \cite[Proposition 2.1]{FM}, also given in Section \ref{sec_FM}, with $r=1, a_1=0, b_1=b^{-2}q^4, t =  q^2$, $k\mapsto \frac{k}{4}$ and $q\mapsto q^4,$ which shows that  \eqref{eqn_bid2} equals
\begin{align}\label{eqn_bid3}
  p_{h,k}(b,q)c^{-1}_{1,\frac{k}{4}}(0,b^{-2}q^4,q^2) \phi_1\left( \begin{array}{c}
0 \\ b^{-2}q^4
\end{array};q^4;q^2\right),
\end{align}
where \begin{align*}
    c_{1,\frac{k}{4}}(0,b^{-2}q^4,q^2) &= 
     \delta_{1,\frac{k}{4}}(0,b^{-2}q^4,q^2) \frac{ u_{1,\frac{k}{4}}(0,b^{-2}q^4,q^2) }{ u_{1,1}(0,b^{-2}q^4,q^2) } \\
     &= \left(1-\frac{q^{k/2}}{1-b^{-k/2}}\right)^{-1}
\frac{q^{k/2}}{1-b^{-k/2}}\cdot \frac{1-b^{-2}q^4}{q^2} \\
&=   \frac{(-1)^h}{1-b^{-k/2}-(-1)^h}\cdot\frac{1-b^{-2}q^4}{q^2}.
\end{align*}  

The following is equivalent to a $q$-hypergeometric identity of Fine,  upon replacing $q\mapsto q^2$ and $b\mapsto \pm b^{-1}q^2$ in \cite[(8.4)]{Fine}:
\begin{align*} \sum_{n=0}^\infty \frac{q^{n^2+n}}{(\pm b^{-1}q^2;q^2)_{n+1}} = 
   \left(\sum_{\ell=0}^\infty q^{\ell^2+\ell}\right)
\prod_{m=0}^\infty \frac{1}{1-b^{-2}q^{4m+4}} + \frac{\pm b^{-1} q^2}{1-b^{-2}q^4}\sum_{n=0}^\infty \frac{q^{2n}}{(b^{-2}q^8;q^4)_n}.\end{align*}Equivalently (also recalling definition  \eqref{def_phir}) we have that
\begin{align}\label{eqn_bf}
   \phi_1\left( \begin{array}{c}
0 \\ b^{-2}q^4
       \end{array};q^4; q^2\right)
    = \frac{1-b^{-2}q^4}{\pm b^{-1}q^2} \left(\sum_{n=0}^\infty \frac{q^{n^2+n}}{(\pm b^{-1}q^2;q^2)_{n+1}}- \left(\sum_{\ell=0}^\infty q^{\ell^2+\ell}\right)
\prod_{m=0}^\infty \frac{1}{1-b^{-2}q^{4m+4}}\right).
\end{align}  Using \eqref{eqn_bf} with \eqref{eqn_bid3} proves that with $q=\zeta_k^h$ a reduced $k$th root of unity, we have 
\begin{align}\notag 
  \sum_{n=0}^{k-1}(b;q^2)_n  (-b^{-1}q^2)^n    & = \ p_{h,k}(b,q) (-1)^h (\pm b) (1-b^{-k/2}-(-1)^h) \\ &\hspace{.3in} \times 
\left(\sum_{n=0}^\infty \frac{q^{n^2+n}}{(\pm b^{-1}q^2;q^2)_{n+1}}- \left(\sum_{\ell=0}^\infty q^{\ell^2+\ell}\right)
\prod_{m=0}^\infty \frac{1}{1-b^{-2}q^{4m+4}}\right) \label{eqn_bid4}\end{align} as wanted. \hfill\qed

\subsection{Proof of Theorem \ref{thm_antimock3}}
The antiquantum identities stated in Theorem \ref{thm_antimock3} for the third order mock theta functions $\psi$ and $\phi$ are established in \cite[Cor.\ 5.2]{FM}, in part by using a special case of our more general result \cite[Prop.\ 2.1]{FM}  (see also Section \ref{sec_intro} and Section \ref{sec_FM}).  Similarly, the antiquantum identity stated in Theorem \ref{thm_antimock3} for the third order mock theta function $\nu$ is established as a corollary to Theorem \ref{thm_bid} in Corollary \ref{cor_nu}, and its proof is given in Section \ref{sec_intro}.  

To prove the remaining antiquantum identities stated in Theorem \ref{thm_antimock3} for the third order mock theta functions $\omega, \rho, f$ and $\chi$, we use Watson's relations for the third order mock theta functions \eqref{eqn_wat1}-\eqref{eqn_wat4}, combined with our aforementioned antiquantum identities for $\psi, \phi$ and $\nu$, as well as the Jacobi theta identities \eqref{eqn_jacids}, our Lemma \ref{lem_etaquot}, and some simplifying. We also use convergence of the third order mock theta functions appearing in Theorem \ref{thm_antimock3} at the relevant roots of unity (see Lemma \ref{lem_mockconv}).  

To this end, we provide a detailed proof of the stated antiquantum identity for the third order mock theta function $\omega$ below.  For brevity's sake, we leave the detailed proofs of the remaining antiquantum identities for the third order mock theta functions $\rho, f$ and $\chi$ to the reader, noting that they follow similarly, using the methods stated above.   

We use Watson's \cite{Watson} third order mock theta relation \eqref{eqn_wat4}
$$\nu(-q)-q\omega(q^2) = \tfrac12 q^{-\frac14}\vartheta_2(0;q)\prod_{r=1}^\infty (1+q^{2r}),$$
which is equivalent to

\begin{align}\notag \omega(q^2) &= q^{-1}\nu(-q) 
-\tfrac12 q^{-\frac54}\vartheta_2(0;q) \prod_{r=1}^\infty(1+q^{2r}) \\ \notag
&= q^{-1}\nu(-q) - q^{-1}\prod_{n=1}^\infty\frac{(1-q^{4n})}{(1-q^{4n-2})^2}\\ \notag 
&= q^{-1}\nu(-q) - v(q^2) 
\\
\notag
&= q^{-1}\sum_{n=0}^\infty (-q;q^2)_n q^n - v(q^2) \ \notag \\ &= \widetilde{\omega}_a(q^2) - v(q^2) \notag \\ \label{eqn_omaid} &= \widetilde{\omega}(q^2),\end{align}
for $|q|<1.$ Here we have used the Jacobi theta identity for $\vartheta_2$  in \eqref{eqn_jacids}, Euler's identity 
$(-q;q)_\infty = (q;q^2)_\infty^{-1}$ \cite[(1.2.5)]{Andrews},
 the definitions of $\widetilde{\omega}_a, v, \widetilde{\omega}$ and that $\nu(-q) = \widetilde{\nu}(-q)$ for $|q|<1.$ 

Next,  (using the definition of $\eta(\tau)$ in  \eqref{def_eta}) we re-write 
$$v(q) = q^{-\frac23} \frac{\eta^3(2\tau)}{\eta^2(\tau)},$$
for $q=e^{2\pi i \tau}, \tau \in \mathbb H.$  Combining this with part ii) of Lemma \ref{lem_etaquot} establishes that $v(q^2)$ vanishes at (reduced) roots of unity $\zeta_k^h$ with $k\equiv 0 \pmod{4}$.   It is not difficult to check (see Lemma \ref{lem_mockconv}   and its proof in Section \ref{sec_mock}) that $\omega(q^2)$ and $\nu(-q)$ converge at (reduced) roots of unity $\zeta_k^h$ with $k\equiv 0 \pmod{4}$.  Further, by the dominated convergence theorem, the values of these mock theta functions at such roots of unity equal 
their radial limits.  Hence, the above shows that $\omega(q^2) = q^{-1}\nu(-q)$ for $q=\zeta_k^h$ with $k\equiv 0 \pmod{4}.$   Combining this with the fact that $\widetilde{\omega}_a(q^2) = q^{-1}\widetilde{\nu}(-q),$ as well as $\nu(-q) = -\tfrac13 (\widetilde{\nu}_a)_{[k]}(-q)$ for $q=\zeta_k^h$ (reduced) with $k\equiv 0 \pmod 4$ from Corollary \ref{cor_nu}, we have that 

$$\omega(q^2) = - \tfrac13 (\widetilde{\omega}_a)_{[k]}(q^2)$$ for $q=\zeta_k^h$ (reduced) with $k\equiv 0 \pmod{4}$
as claimed.  

To conclude, we  elaborate on the  antiquantum nature of this identity by confirming that $\widetilde{\omega}_a(q^2)$ does not converge (as an infinite series) at roots of unity $q=\zeta_k^h$ with $k\equiv 0 \pmod{4}$, and is thus  unnaturally  truncated at the $k-1$ term in our identity, and multiplied by $-1/3$.  \hfill\qed

\end{document}